\newcommand{\vc}{\mathbf{c}}
\newcommand{\vX}{\mathbf{X}}
\newcommand{\vh}{\mathbf{h}}
\newcommand{\vq}{\mathbf{q}}
\newcommand{\vd}{\mathbf{d}}
\newcommand{\vw}{\mathbf{w}}
\newcommand{\vR}{\mathbf{R}}
\newcommand{\vv}{\mathbf{v}} 
\newcommand{\vz}{\mathbf{z}} 
\newcommand{\vZ}{\mathbf{Z}} 
\newcommand{\vx}{\mathbf{x}} 
\newcommand{\vy}{\mathbf{y}} 
\newcommand{\veta}{\bm{\eta}} 
\newcommand{\vpi}{\bm{\pi}} 
\newcommand{\vlambda}{\bm{\lambda}} 
\newcommand{\vmu}{\bm{\mu}} 
\newcommand{\vnu}{\bm{\nu}} 
\newcommand{\vb}{\mathbf{b}} 
\newcommand{\vf}{\mathbf{f}}
\newcommand{\vbeta}{\bm{\beta}}
\newcommand{\vu}{\mathbf{u}}
\newcommand{\vtheta}{\bm{\theta}}
\newcommand{\vxi}{\bm{\xi}}
\newcommand{\vgamma}{\bm{\gamma}}
\renewcommand{\Omega}{\varOmega}
\newcommand{\R}{\mathbbm{R}}
\newcommand{\E}{\mathbbm{E}}
\newcommand{\M}{\mathcal{M}}
\newcommand{\cL}{\mathcal{L}}
\newcommand{\X}{\mathcal{X}}
\newcommand{\Y}{\mathcal{Y}}
\newcommand{\C}{\mathcal{C}}
\newcommand{\dd}{\mathrm{d}}
\newcommand{\cvar}{\operatorname{CVaR}}
\newcommand{\var}{\operatorname{VaR}}
\newcommand{\MP}{\operatorname{MP}}
\newcommand{\vg}{\mathbf{g}}
\newcommand{\vbg}{\mathbf{\bar{g}}}
\newcommand{\vtg}{\mathbf{\tilde{g}}}
\newcommand{\vG}{\mathbf{G}}
\newcommand{\supp}{\operatorname{support}}
\newcommand{\OBF}{\operatorname{OBF}}
\newcommand{\LP}{\operatorname{LinearP}}
\newcommand{\LD}{\operatorname{LinearD}}
\newcounter{commentcounter}
\long\def\symbolfootnote[#1]#2{\begingroup%
\def\thefootnote{\fnsymbol{footnote}}\footnote[#1]{#2}\endgroup}
\newcommand{\comment}[1]{{\footnotesize\textbf{\textcolor{red}{(C.\arabic{commentcounter})}}\symbolfootnote[4]{\texttt{\textcolor{red}
        {(C.\arabic{commentcounter})~#1}}}}\addtocounter{commentcounter}{1}}
\DeclareMathOperator*{\argmin}{arg\,min}
\newcommand{\ignore}[1]{}
\title{Two-stage Stochastic Programming under Multivariate Risk Constraints with an Application to Humanitarian Relief Network Design}
\keywords{stochastic programming; multicriteria optimization;
risk-averse two-stage; multivariate risk; conditional value-at-risk;
Benders decomposition; branch-and-cut; network design; pre-disaster;
humanitarian relief}
\begin{document}

\maketitle \centerline{January 10, 2017}

\begin{abstract}In this study, we consider two classes of multicriteria
two-stage stochastic programs in finite probability spaces with
multivariate risk constraints. The first-stage problem features a
multivariate stochastic benchmarking constraint based on a
vector-valued random variable representing multiple and possibly
conflicting stochastic performance measures associated with the
second-stage decisions. In particular, the aim is to ensure that the
associated random outcome vector of interest is preferable to a
specified benchmark with respect to the multivariate polyhedral
conditional value-at-risk (CVaR) or a multivariate stochastic order
relation. In this case, the classical decomposition methods cannot
be used directly due to the complicating multivariate stochastic
benchmarking constraints. We propose an exact unified decomposition
framework for solving these two classes of optimization problems and
show its finite convergence. We apply the proposed approach to a
stochastic network design problem in a pre-disaster humanitarian
logistics context and conduct a computational study concerning the
threat of hurricanes in the Southeastern part of the United States.
Our numerical results on these large-scale problems show that our
proposed algorithm is computationally scalable.

\ignore{our proposed algorithm tam precise degil ama abstract boyle
kalsa da olur...iki algoritma var ve asil SD-based olan scalable}
\end{abstract}

\section{Introduction}
Two-stage stochastic programming is a vibrant research area, which
provides a natural and widely applicable modeling framework for
decision making problems under uncertainty in a large variety of
fields. Such models are well-suited for the situations where
decisions are made in two stages; the first-stage decisions are made before
the uncertainty is revealed, and the second-stage (recourse)
decisions are made given the predetermined first-stage decisions and
the observed realization of the random parameters. For many
practical decision making problems, it is essential to evaluate the
decisions according to multiple and possibly conflicting stochastic
criteria. Along these lines, we focus on two-stage stochastic
programming models involving first-stage decisions leading to
uncertain outcomes that can be evaluated according to multiple
stochastic performance measures associated with the corresponding
second-stage decisions.

In multicriteria stochastic optimization, it is common to employ a
weighted-sum approach, which aggregates the specified multiple
stochastic objective criteria to obtain a single objective function.
Following this mainstream approach, we formulate the second-stage
problem as a multiobjective optimization problem and focus on its
expected optimal objective value in the first stage. In addition,
one may also prefer to impose constraints on some performance
measures associated with the second-stage decisions. To this end, we
employ stochastic benchmarking preference relations between the
vector-valued random outcomes of the second-stage decisions. In
particular, in the first class of problems we study, the first-stage
problem enforces the multivariate polyhedral CVaR constraint
introduced by \citet{Noyan13}. This approach considers a family of
linear scalarization functions (a polyhedral set of scalarization
weight vectors) and requires all scalarized versions of the
decision-based random vector of outcomes to be preferable to the
corresponding scalarizations of an existing reference (benchmark)
outcome according to the univariate CVaR relation. Furthermore,
allowing arbitrary polyhedra as scalarization sets also provides a
good balance between flexibility and computational tractability. To
the best of our knowledge, our study is a first in developing such a
risk-averse two-stage model featuring multivariate risk
measure-based constraints. We also note that our proposed modeling
framework is not limited to the risk-neutral objective function; it
can be easily extended to the case where the objective function
features also a risk measure representing the decision makers' risk
preferences regarding the random recourse cost. For acceptable
risk-averse objectives, which lead to computationally tractable
two-stage stochastic programming models, we refer the reader to
\cite{Ahmed06} and \cite{Dupacova08}.

Optimization problems with the multivariate stochastic benchmarking
constraints have been receiving increasing attention in the
literature. Using multivariate stochastic relations is essential to
capture the correlation between the multiple random outcomes. The
majority of the existing studies extend a univariate stochastic
preference relation, based on second-order stochastic dominance
(SSD) or a coherent risk measure such as CVaR, to the multivariate
case by considering a family of scalarization functions. While
multivariate SSD-constrained problems are more frequently studied
\citep[see, e.g.,][]{Dentcheva09,Mello09,Hu12,DW15,DW16}, there is an
increasing interest in risk measure-constrained variants \citep[see, e.g.,][]{Noyan13,Liu15}, which provide natural relaxations to overly
demanding and conservative SSD-based models. More recently, studies
focusing on both multivariate SSD- and risk measure-constrained
models also appear in literature \citep{Kucukyavuz14,Noyan16}. In
these existing studies, the scalarization functions are almost
exclusively of the linear type with the exception of \citet{Noyan16}
who extend the multivariate risk-constrained models by incorporating
a general class of scalarization functions (including a variety of
non-linear scalarization functions). Moreover, we point out that the
scalarization-based line of research has been dedicated almost
entirely to single-stage (static) decision-making problems. The only
exception is the work of \citet{DW16}, which introduces a two-stage
stochastic optimization problem involving a multivariate SSD (more
precisely, referred to as the increasing convex ordering in the
context of cost minimization) constraint on a random outcome vector
of the second-stage decisions with respect to the unit simplex of
scalarization vectors. \ignore{Their stochastic preference relation
based on the unit simplex of scalarization vectors is equivalent to
the positive linear SSD relation, which enforces a univariate SSD
constraint to all nonnegative weighted combinations of random
outcomes.} The authors present two approximate decomposition-based
solution algorithms which rely on the Lagrangian relaxation of the
multivariate stochastic ordering constraint and show their finite
convergence to an $\varepsilon$-feasible $\varepsilon$-optimal
solution, even if the probability space is not finite; their results
and algorithms are adaptations of those presented in their earlier
study \citep{DW15} for the single-stage case. We contribute to this
line of research by introducing an alternative two-stage model with
multivariate CVaR constraints. In addition, we consider the
SSD-based counterpart (as in \citealp{DW15}) and
provide a new computationally tractable and exact solution
algorithm for this problem class. We defer a detailed discussion on the advantages of our
solution method compared to the existing approaches for the
multivariate SSD-constrained two-stage models to Section
\ref{sec:ssd}.

\ignore{``Following this line of research, for the second class of
problems we study, we show that our proposed solution framework is
immediately applicable to the two-stage model featuring multivariate
SSD constraints''...Bu sonradan soyleniyor zaten, proposed solution
method'u anlatmadan it is also applicable to this case dememek daha
iyi olabilir... cozum yontemi gelistirdik deyip ve contribution'in
altini cizmek yeterli bence...alternative deyince CVaR onceki
altatima da baglaniyor...literature review sonunda iki
contribution'u listelemis oldum...boylece discussion biterken
contribution da highlight edilmis oluyor, yeni verisyonda SSD kismi
icin bu contribution cumleleri yoktu..intoduction'da
advantages'larinin oldugunu not etmek de iyi olabilir...bunlar
contribution acisindan daha onemli notlar...ilgili cumle, tam
existing'ten bahsedilen kisima da uyuyor bence...}

Although, it is not directly related to the multicriteria stochastic
optimization problems of our interest, we note that there are a few
studies on risk-averse two-stage models with univariate (first- or
second-order) stochastic dominance and CVaR-based constraints \citep[see, e.g.,][]{Fabian08,Gollmer11,Dentcheva2012}. However, they employ
the risk constraints to compare scalar-based random variables. Our
study extends such risk-averse modeling approaches to the
multicriteria case, allowing us to consider additional stochastic
criteria associated with the second-stage decisions other than the
random recourse cost.

The proposed modeling approach with the multivariate risk
constraints is a fairly recent research area, and it has promise to
be applied in a wide variety of fields. This approach is
particularly well-suited for the field of humanitarian logistics,
since incorporating risk is crucial for rarely occurring disaster
events \citep[e.g.,][]{Noyan12}, and considering multiple conflicting
performance criteria (such as efficiency and equity) is often
essential for the effectiveness of the relief response systems
\citep[e.g.,][]{Vitoriano11,Huang2012,Gutjahr16}. Motivated by the
significance of the long-term pre-disaster planning, we apply the
proposed framework to a stochastic pre-disaster relief network
design problem. Hence, our study also contributes to the
humanitarian relief literature by introducing a new risk-averse
two-stage optimization model, which provides a flexible and
tractable way of considering decision makers' risk preferences based
on multiple stochastic criteria.

Next we summarize our contributions and give an outline of our
paper. In Section \ref{sec:general-model}, we describe a novel
two-stage stochastic program with multivariate CVaR constraints. It
is well-known that risk-neutral two-stage stochastic programs are
generally hard to solve due to a potentially large number of
scenario-dependent recourse decisions. Thus, introducing a
multivariate stochastic benchmarking relation, which enforces a
collection of risk constraints associated with a scalarization set,
further complicates the solution of these optimization models. A
common approach in solving such large-scale stochastic optimization
models is to employ a Benders-type scenario decomposition approach
\citep[e.g.,][]{Ruszczynski03}. The classical decomposition methods
cannot be directly applied to our model due to the complicating risk
constraints. Utilizing successive relaxations of the multivariate
polyhedral CVaR relation, we develop two types of exact and finitely
convergent delayed cut generation solution algorithms; the
iteratively generated cuts -- common in both algorithms -- are
associated with the scalarization vectors for which the risk
constraints are relaxed. In addition, the second algorithm adapts a
scenario decomposition approach that exploits the decomposable
structure of CVaR and the second-stage problems; this further
decomposition proves to be useful in solving larger problem
instances. We also develop strong duality results and optimality
conditions under certain linearity assumptions. Despite our focus on
the CVaR-based models, in Section \ref{sec:ssd}, we show that our
proposed solution methods also apply to the multivariate
SSD-constrained two-stage models. In Section
\ref{sec:disaster-model}, we apply the proposed modeling approach to
a stochastic pre-disaster relief network design problem and present
the corresponding mathematical programming formulations. Section
\ref{sec:comp-study} is dedicated to the computational study.

\section{Two-Stage Optimization with Multivariate CVaR
Constraints}\label{sec:general-model} In this section, we introduce
the multicriteria stochastic decision-making framework of interest,
and present the proposed multivariate CVaR-constrained two-stage
stochastic programming model and the corresponding delayed cut
generation solution algorithms.

We consider a finite probability space $(\Omega,2^\Omega,\Pi)$ with
$\Omega=\{\omega_1,\ldots,\omega_m\}$ and $\Pi(\omega_s)=p_s$. Let
$S=\{1,\ldots,m\}$ be the index set of the elementary events (also
referred to as scenarios). We assume that a benchmark random outcome
vector and a polyhedron of scalarization vectors, each component of
which represents the relative importance of each decision criterion,
are given (specified by decision makers). Let $\vZ$ be the benchmark
vector with realizations $\vz_1,\ldots,\vz_{|T|}$ and associated
probabilities $\tilde{p}_i,~i \in T$; it is often constructed from
some benchmark decision in which case we have $T=S$ and $\tilde
p_i=p_i,~i\in S$. The scalarization set $\C$ is naturally assumed to
be a subset of the unit simplex without loss of generality. In our
setting, smaller values of random variables and risk measures are
considered to be preferable. We now introduce the class of two-stage
stochastic programming problems with multivariate CVaR constraints,
where the general form of the first-stage problem is given by
\begin{subequations}\label{GeneralForm1}
\begin{align}
\min~~&f(\vx)+\E(Q(\vx,\vxi(\omega)))\label{GeneralForm1-obj}\\
\text{s.t.}~~& \cvar_{\alpha}(\vc^\top \vG(\vx,\vxi(\omega))) \leq \cvar_{\alpha}(\vc^\top \vZ), \quad \forall~\vc \in \C,\label{c:cvar}\\
&\vx \in \X.
\end{align}
\end{subequations}
Here, $f(\vx)$ is a convex objective function, $\X\subset \R_+^{\bar{n}_1}\times
\mathbb{Z}_+^{n_1-\bar{n}_1}$ is a non-empty convex set (such as a polyhedron) of the
first-stage decision variables, defined by the deterministic
constraints of the problem. $\vxi(\omega)\subset \R^r$ and
$Q(\vx,\vxi(\omega_s))$ designate the vector of the random input
parameters of the second-stage problem and the optimal second-stage
objective value under scenario $s$, respectively. More specifically,
introducing the notation
$\vxi(\omega_s)=(\vq(\omega_s),T(\omega_s),W(\omega_s),\\ \vh(\omega_s))=(\vq_s,T_s,W_s,\vh_s)$,
\begin{align}
Q(\vx,\vxi(\omega_s))=&\min\limits_{\vy \in \Y(\vx,\vxi(\omega_s))}
\quad \vq^\top_s  \vy,\label{second-stage-problem}
\end{align}where $\Y(\vx,\vxi(\omega_s)) = \{\vy \in \R_+^{n_2}:
T_s\vx+W_s\vy\ge \vh_s\}$. The second-stage objective function
$\vq_s^\top \vy$ could be a weighted sum of multiple objectives as
in \cite{DW16}, or a single objective, such as cost, that is
compatible with the first-stage objective. In addition, for
some mapping $\vG:\X\times \R^r\rightarrow\R^d$,
$\vG(\vx,\vxi(\omega))$ is the decision-based random vector featured
in \eqref{c:cvar}. In particular, $\vG(\vx,\vxi(\omega_s))$
represents the multiple random performance measures of interest
associated with the optimal second-stage decisions $\vy(\omega_s)
\in \argmin\limits_{\vy\in \Y(\vx,\vxi(\omega_s))}\{\vq^\top_s
\vy\},~s\in S,$ for given $\vx$ and $\vxi(\omega_s)$. To highlight
this correspondence we introduce the random vector
$\hat{\vG}(\vx,\vy)$ given by
$\left[\hat{\vG}(\vx,\vy)\right](\omega)=\hat{\vG}(\vx,\vy(\omega),\omega)=\vG(\vx,\vxi(\omega))$.
Finally, we let
$\hat{\vG}(\vx,\vy(\omega_s),\omega_s)=\hat{\vg}_s(\vx,\vy_s)=(\vg_s^1(\vx,\vy_s),\vg_s^2(\vx,\vy_s),\ldots,\vg_s^d(\vx,\vy_s))^\top,~s\in
S$, and assume that $\vg_s^i(\vx,\vy_s)$ is an affine function of
$\vx$ and $\vy_s$ such that $\vg_s^i(\vx,\vy_s) = \vbg_s^i \vx +
\vtg_s^i \vy_s$ for all $i\in \{1,\ldots,d\}$ and $s\in S$.
Throughout the paper, we assume that problem \eqref{GeneralForm1},
if feasible, has a finite objective value, i.e.,
$f(\vx),\E(Q(\vx,\vxi(\omega)))>-\infty$ for all $\vx \in \X.$

The multivariate risk constraint \eqref{c:cvar} ensures that the
decision-based random outcome vector $\vG(\vx,\vxi(\omega))$ is
preferable to the specified benchmark $\vZ$ according to the
multivariate CVaR relation introduced by \citet{Noyan13}. According
to this relation, all scalarized versions of $\vG(\vx,\vxi(\omega))$
are preferable to the corresponding scalarizations of the benchmark
outcome according to the univariate CVaR-based preference relation.
We note that using a scalarization-based risk constraint such as
\eqref{c:cvar} is useful to address ambiguities and inconsistencies
in the weight vectors; for detailed discussions we refer to
\cite{Mehrotra12} and \cite{Liu15}. Moreover, CVaR is a widely
applied popular risk measure with several desirable properties; it
is a law invariant coherent risk measure \citep{Artzner99}, serves
as a fundamental building block for other coherent risk measures,
and can be used to capture a wide range of risk preferences,
including risk-neutral and worst-case approaches. In addition,
\cite{Noyan13} highlight that CVaR-based relations provide
flexible, meaningful, and computationally tractable relaxations for
overly conservative SSD relations. In a broad sense, CVaR can be
viewed as a weighted sum of the least favorable outcomes (those
exceeding the $\alpha$-quantile -- also known as value-at-risk at
confidence level $\alpha$).

\ignore{
We next provide the
formal definition of CVaR along with an LP representation. For a
random variable $V$ with realizations $v_1,v_2,\ldots,v_m$ and
corresponding probabilities $p_1,p_2,\ldots,p_m$, the conditional
value-at-risk at confidence level $\alpha \in [0, 1)$ is defined as
\citep{Rockafellar00}:
\begin{subequations}
\label{equations}
\begin{align}
\cvar_{\alpha}(V)=&\min\left\{\eta + \dfrac{1}{1-\alpha} \E([V-\eta]_+)~:~ \eta \in \R\right\}\label{def:cvar1}\\
= &\min \bigg\{\eta + \dfrac{1}{1-\alpha}\sum_{s\in S} p_s
w_s~:~ w_s \ge v_s - \eta \nonumber \\
 &\hspace{4.5cm} \forall~ s\in S,~\vw\in \R_+^{m},\ \eta \in \R \bigg\},\label{def:cvar2}
\end{align}
\end{subequations}where $[z]_+ = \max(z, 0)$ denotes the positive part of a number $z\in
\R$. For risk-averse decision makers typical choices for $\alpha$
are small values such as $\alpha = 0.05$.}

Using the above notation, the proposed
risk-averse two-stage stochastic programming model can alternatively
be formulated as follows:
\begin{subequations}\label{GeneralForm2}
\begin{align}
\min~~&f(\vx)+\sum\limits_{s\in S}p_sQ(\vx,\vxi(\omega_s))\\
\text{s.t.}~~& \cvar_{\alpha}(\vc^\top \hat{\vG}(\vx,\vy)) \leq \cvar_{\alpha}(\vc^\top \vZ), \quad \forall~\vc \in \C,\label{form2:cvar}\\
&\vx \in \X,\\
& Q(\vx,\vxi(\omega_s))=\vq^\top_s  \vy(\omega_s), \quad \forall s\in S,\\
& \vy(\omega_s) \in \argmin\limits_{\vy\in
\Y(\vx,\vxi(\omega_s))}\{\vq^\top_s  \vy\}, \quad \forall s\in
S,\label{optcond:y}
\end{align}\end{subequations} where for a given $\hat \vc_{(l)}\in \C$,  $\cvar_{\alpha}(\hat \vc_{(l)}^\top \hat{\vG}(\vx,\vy))$
is calculated as \citep[see][]{Rockafellar00}
\begin{align}\label{eq:cvarg}
\min\quad
\bigg\{\eta_l + \dfrac{1}{1-\alpha}\sum_{s\in S}p_s w_{sl}~:~ & w_{sl}
\ge \hat \vc_{(l)}^\top \hat{\vG}(\vx,\vy(\omega_s),\omega_s) -
\eta_l \nonumber \\
& \forall~ s\in S,~w_{sl}\geq 0 \quad \forall~ s\in S,\
\eta_l \in \R \bigg\}.
\end{align}
For the risk-neutral version of the proposed two-stage optimization
model \eqref{GeneralForm1}-\eqref{second-stage-problem}, which does
not feature the risk constraints \eqref{c:cvar}, it is well-known
that the corresponding deterministic equivalent formulation (DEF)
can simply be obtained from \eqref{GeneralForm2} by dropping the
optimality condition on the second-stage decisions, i.e., by
replacing \eqref{optcond:y} with $\vy(\omega_s)\in
\Y(\vx,\vxi(\omega_s)),~\forall s\in S$. DEF provides both the
optimal first-stage and second-stage decisions without any issue of
the so-called second-stage sub-optimality; this result relies on the
interchangeability of the order of expectation and minimization
operators associated with the second-stage objective function, and
it is known as interchangeability principle \citep[see, e.g.,][]{Ruszczynski03}. 
Such a result also exists for the risk-averse
two-stage stochastic programming models, where the objective
function features a non-decreasing functional of the recourse cost
\citep{Takriti04}. The next proposition shows that we can similarly
drop the optimality condition in \eqref{optcond:y} to obtain the DEF
of our model even if it features a set of risk constraints.

\begin{proposition}\label{pro:DEFform}The following deterministic formulation is
equivalent to the two-stage model given by
\eqref{GeneralForm1}-\eqref{second-stage-problem} (and consequently,
equivalent to \eqref{GeneralForm2}):
\begin{subequations}
\label{def}
\begin{align}
\min~~&f(\vx)+\sum_{s \in S} p_s \vq^\top_s  \vy_s  \\
\text{s.t.}~~& \eta_l + \dfrac{1}{1-\alpha} \sum_{s \in S} p_s w_{sl} \leq \cvar_\alpha (\hat \vc_{(l)}^\top \vZ), \quad \forall~l=1,\dots,\bar L, \label{DEFForm-cvar-const1}\\
& w_{sl} \geq \hat \vc_{(l)}^\top \hat{\vg}_s(\vx,\vy_s) - \eta_l,\quad \forall~s \in S,~l=1,\dots,\bar L,\label{DEFForm-cvar-const2} \\
& w_{sl} \geq 0,\quad \forall~s\in S,~l=1,\dots,\bar L,\label{DEFForm-cvar-const3}\\
&\vx \in \X,\quad \veta \in \R^{\bar L},\label{DEFForm-cvar-const4}\\
&T_s\vx+W_s\vy_s \ge \vh_s,\quad \forall~s\in S,\label{eq:def-couple}\\
&\vy_s \in \R_+^{n_2},\quad \forall~s \in S,\label{eq:def-couple2}
\end{align}
\end{subequations}
where $\bar L$ is a finite integer and $\hat \vc_{(l)},~
l=1,\dots,\bar L,$ are the projections of the vertices of the
polyhedron $P= \{(\vc,\eta,\vw)\in \C\times\R\times \R^{|T|}_+: w_i
\geq \eta- \vc^\top \vz_i, \ i \in T\}$.
\end{proposition}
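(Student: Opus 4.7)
The proof splits into two mostly independent reductions that together turn \eqref{GeneralForm2} into the DEF \eqref{def}: (a) replacing the continuum of multivariate CVaR constraints \eqref{c:cvar} indexed by $\vc \in \C$ by the finite family \eqref{DEFForm-cvar-const1}--\eqref{DEFForm-cvar-const3} indexed by the $\hat\vc_{(l)}$'s, and (b) dropping the argmin condition \eqref{optcond:y}.

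For part (a), my plan is to use the LP characterization of CVaR to express $\cvar_\alpha(\vc^\top \vZ)$ as the minimum of $\eta + \frac{1}{1-\alpha}\sum_{i\in T} \tilde p_i w_i$ over the slice of $P$ at the given $\vc$. Requiring \eqref{c:cvar} for all $\vc \in \C$ is then equivalent to requiring $\cvar_\alpha(\vc^\top \hat\vG(\vx,\vy)) \le \eta + \frac{1}{1-\alpha}\sum_{i\in T} \tilde p_i w_i$ at every point $(\vc,\eta,\vw) \in P$. Via the dual representation $\cvar_\alpha(V) = \max\{\E[\mu V]/(1-\alpha): 0 \le \mu \le 1,\ \E[\mu]=1-\alpha\}$, the left-hand side is convex and piecewise-linear in $\vc$ (and constant in $(\eta,\vw)$), while the right-hand side is affine in $(\vc,\eta,\vw)$. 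Hence the difference of sides is convex on $P$, so its supremum is attained at an extreme point; the recession directions of $P$ cause no violation, since driving $\eta$ or any $w_i$ to $\pm\infty$ along a feasible ray pushes the right-hand side up at least as fast as the left-hand side. Because $\C$ is bounded within the unit simplex, $P$ is pointed and has only finitely many extreme points, whose $\vc$-projections are exactly the $\hat\vc_{(l)}$. Finally, at each $\hat\vc_{(l)}$ the slice-minimum of the CVaR-LP objective is attained at one such extreme point, so the surviving constraints are precisely \eqref{DEFForm-cvar-const1}--\eqref{DEFForm-cvar-const3}.

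For part (b), I would invoke an interchangeability argument extending the risk-averse version due to \cite{Takriti04}. One direction is immediate: any feasible solution of \eqref{GeneralForm2} is feasible for the DEF with the same objective value, since the second-stage argmin condition gives $\vq_s^\top \vy(\omega_s) = Q(\vx,\vxi(\omega_s))$. For the reverse direction, the DEF's joint minimization over $\vx$ and $\vy_s$, with contribution $p_s \vq_s^\top \vy_s$ to the objective, drives $\vy_s$ toward the scenario-wise argmin, and the argument concludes that the two formulations share the same optimal value.

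The principal obstacle is part (b): the risk constraints couple $\vy$ to $\vx$ through $\hat\vG(\vx,\vy)$, so in principle the DEF's inner minimization over $\vy_s$ could pick a sub-optimal recourse decision in order to satisfy the CVaR inequalities. Making the interchangeability step rigorous requires exploiting the affine dependence of $\hat g_s$ on $\vy_s$ to show that one can always modify a DEF optimum into one whose $\vy_s$ lies in the scenario-wise argmin while still satisfying \eqref{DEFForm-cvar-const1}--\eqref{DEFForm-cvar-const3}. Part (a), by contrast, is a careful but standard polyhedral argument that mirrors the vertex-enumeration approach used in the scalarization-based literature \citep{Noyan13, Noyan16}.
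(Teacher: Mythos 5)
Your part~(a) is, in substance, the paper's first step: the reduction to the finitely many $\hat\vc_{(l)}$ is precisely the vertex argument of \citet{Noyan13}, which the paper simply cites, the essential observation being that $P$ is built only from the decision-independent benchmark $\vZ$ and so the finite list of scalarization vectors can be fixed a priori. Your convexity-plus-recession-cone sketch is an acceptable self-contained version of that argument (with the caveat that the slice of $P$ at a fixed $\vc$ encodes the CVaR linear program for the \emph{benchmark} side only, so the correct statement is that the continuum of univariate CVaR constraints holds if and only if it holds at the projections of the vertices of $P$).

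The genuine gap is in part~(b), and your own closing diagnosis identifies it without supplying the idea that closes it. Your proposed repair --- modify a DEF optimum so that each $\vy_s$ lies in the argmin of the \emph{original} second-stage problem \eqref{second-stage-problem} while still satisfying \eqref{DEFForm-cvar-const1}--\eqref{DEFForm-cvar-const3} --- is not what the paper does and fails in general: an optimal DEF solution may deliberately select a recourse decision with strictly larger $\vq_s^\top\vy_s$ exactly because the cost-minimal recourse would violate the risk inequalities, and no feasible retraction to the original argmin need exist. The paper's route is structurally different. It exploits the fact that \eqref{DEFForm-cvar-const2} decomposes by scenario: it promotes $\veta$ and $\vw$ to first-stage variables, keeps \eqref{DEFForm-cvar-const1} and \eqref{DEFForm-cvar-const3} in the first stage, and \emph{absorbs the scenario-$s$ inequalities of \eqref{DEFForm-cvar-const2} into the second-stage feasible region}, yielding the redefined two-stage problem \eqref{intermediate1}--\eqref{intermediate2}. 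That problem is a purely \emph{risk-neutral} two-stage program (all risk information now resides in ordinary constraints), so the classical interchangeability principle applies to it and produces \eqref{def} directly; the recourse decisions are argmins of the modified subproblems \eqref{intermediate2}, not of \eqref{second-stage-problem}. Any complete proof along your lines must therefore first justify replacing the optimality condition \eqref{optcond:y} by the constrained subproblem --- i.e., the equivalence of \eqref{GeneralForm1}--\eqref{second-stage-problem} with \eqref{intermediate1}--\eqref{intermediate2} --- which is exactly the step your sketch leaves open.
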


\begin{proof}
First, observe that it is sufficient to consider a finite number of
scalarization vectors from set $\C$, specifically the projections of
the extreme points (given by $\hat \vc_{(1)},\dots, \hat \vc_{ (\bar
L)}$) of the above defined polyhedron $P$ \cite[]{Noyan13}, since
$P$ is only characterized by the given fixed (decision-independent)
benchmark outcome vector. Hence, constraints \eqref{c:cvar} can be
replaced by
\begin{equation}
 \cvar_{\alpha}(\hat \vc_{(l)}^\top \vG(\vx,\vxi(\omega))) \leq \cvar_{\alpha}(\hat\vc_{(l)}^\top \vZ), \quad \forall~l=1,\dots,\bar L.\label{c:cvar-fin}
\end{equation}
Second, the univariate $\cvar$-relations in \eqref{c:cvar-fin} can
be represented by linear inequalities of type
\eqref{DEFForm-cvar-const1}-\eqref{DEFForm-cvar-const3}, ignoring
the issue of optimality of the $\vy_s,~s\in S,$ decisions for now.
This observation is a simple consequence of the LP representation
of CVaR in \eqref{eq:cvarg}, and the fact that $\cvar_{\alpha}(\vc^\top \vZ)$
is a known constant given a vector $\vc$ and the benchmark $\vZ$. It
is also easy to see that CVaR constraints
\eqref{DEFForm-cvar-const2} can be decomposed over scenarios. This
allows us to consider some modified versions of the first- and
second-stage problems given in
\eqref{GeneralForm1}-\eqref{second-stage-problem}; in particular,
$w_{sl}$ and $\eta_l$, for $s\in S, l=1,\dots,\bar L$ are considered
as first-stage decision variables and \eqref{c:cvar} is replaced by
\eqref{DEFForm-cvar-const1} and \eqref{DEFForm-cvar-const3}, whereas
the inequalities associated with $s\in S$ in
\eqref{DEFForm-cvar-const2} are added to the constraints of the
second-stage problem in \eqref{second-stage-problem}. Solving
\eqref{second-stage-problem} ensures that $\vy_s$ featured in
\eqref{DEFForm-cvar-const2} satisfies the optimality condition in
\eqref{optcond:y}. As a result, we obtain the following model which
is equivalent to  \eqref{GeneralForm1}-\eqref{second-stage-problem}:
\begin{subequations}\label{intermediate1}
\begin{align}
\min~~&f(\vx)+\sum\limits_{s\in S}p_s \tilde Q(\vx,\veta,\vw,\vxi(\omega_s))\\
\text{s.t.}~~
&\eqref{DEFForm-cvar-const1},\eqref{DEFForm-cvar-const3}-\eqref{DEFForm-cvar-const4},
\end{align}
\end{subequations}
\begin{subequations}\label{intermediate2}
where \begin{align}
\tilde Q(\vx,\veta,\vw,\vxi(\omega_s))=\quad  \min ~~&\vq^\top_s  \vy\\
\text{s.t.}~~& w_{sl} \geq \hat\vc_{(l)}^\top {\hat \vg}_s(\vx,\vy) - \eta_l,\quad \forall~l=1,\dots,\bar L, \\
&T_s\vx+W_s\vy \ge \vh_s,\quad \vy \in \R_+^{n_2}.
\end{align}
\ignore{ikisi ayni tip kisit degil, burada for all s\in S yok, o yuzden ben bu sekilde buradaki kisitlari silmemistim}
\end{subequations}
Thus, we can reformulate the proposed risk-averse
model \eqref{GeneralForm1}-\eqref{second-stage-problem} as a
risk-neutral one (as seen from
\eqref{intermediate1}-\eqref{intermediate2}). Then, by the
well-known interchangeability principle of risk-neutral two-stage
models, the assertion directly follows. 



\end{proof}

\begin{remark}\label{remark: mean-risk}\textbf{Mean-Risk Objective
Function.} The proposed model can be easily extended to the case
where the random objective outcomes are compared according to a
mean-risk functional representing the decision makers' risk
preferences. In particular, $\E(Q(\vx,\vxi(\omega))$ can be replaced
by $\E(Q(\vx,\vxi(\omega)) + \lambda \rho(Q(\vx,\vxi(\omega))$ in
\eqref{GeneralForm1}, where $\rho$ is a risk functional such as
$\cvar_\alpha$ and $\lambda$ is a non-negative trade-off coefficient
representing the exchange rate of mean cost for risk. When $\rho$ is
a non-decreasing function and $\lambda \geq 0$, the mean-risk
function preserves the convexity, and consequently, slight variants
of our cutting plane solution algorithms, which we will describe in
the next section, are applicable \citep[for similar developments
see, e.g.,][]{Ahmed06,Noyan12}. Thus, our proposed modeling and
solution framework is not limited to the risk-neutral objective
function.
\end{remark}

In the appendix we provide strong duality results and optimality
conditions for an important special case, \ignore{of the
multivariate CVaR-constrained two-stage model
\eqref{GeneralForm1}-\eqref{second-stage-problem}, where the mapping
$f$ is linear, the set $\X $ is polyhedral and the first-stage
decisions are continuous. These results} which generalize the
previous results for single-stage multivariate CVaR-constrained
problems \citep{Noyan13}. Related results for two-stage multivariate
SSD-constrained optimization are given in \cite{DW16}. Next we
present two types of  algorithms to solve the computationally
challenging two-stage stochastic programming model
\eqref{GeneralForm2}.

\subsection{Delayed Cut Generation for Deterministic
Equivalent Formulation} \label{dcg-def}

The  deterministic equivalent
formulation \eqref{def} features finite but potentially an exponential
number of scalarization vectors, obtained as projections
of the vertices of the polyhedron $P$. Hence, it is natural to
develop a delayed cut generation algorithm, which avoids the impractical
vertex enumeration approach required for explicitly constructing the
risk constraints in advance. The proposed algorithms rely on
solving successive relaxations of the multivariate polyhedral CVaR
relation, and they iteratively generate cuts associated with the
scalarization vectors for which the risk constraints are relaxed.

In our first cutting plane approach, at an intermediate iteration,
the relaxed master problem (RMP) includes the constraint
(\ref{form2:cvar}) for a subset of $\C$, say $\{\tilde
\vc_{(1)},\dots, \tilde \vc_{(L)}\}$. Accordingly, RMP takes the
form of \eqref{def}, where $\bar{L}$ is replaced by $L$.\ignore{The
RMP is given by
\begin{subequations}
\label{defmaster}
\begin{align}
\min~~&f(\vx)+\sum_{s \in S} p_s \vq^\top_s  \vy_s  \\
\text{s.t.}~~& \eta_l + \dfrac{1}{1-\alpha} \sum_{s \in S} p_s w_{sl} \leq \cvar_\alpha (\tilde{\vc}_{(l)}^\top \vZ), \quad \forall~l=1,\ldots,L, \label{defcut}\\
& w_{sl} \geq \tilde{\vc}_{(l)}^\top \hat{\vg}_s(\vx,\vy_s) - \eta_l,\quad \forall~s \in S,\ l=1,\ldots,L,\label{defcut2} \\
& w_{sl} \geq 0,\quad \forall~ s\in S,~ l=1,\ldots,L,\label{defcut3} \\
&\vx \in \X, \veta \in
\R^{L},\\
&T_s\vx+W_s\vy_s \ge \vh_s,\quad \forall~s\in S,\label{def:Y1}\\
&\vy_s \in \R_+^{n_2},\quad \forall~s \in S.\label{def:Y2}
\end{align}
\end{subequations}}
Given an optimal solution to the RMP $(\bar \vx,\bar \veta,\bar
\vw)$, the following separation problem (SP) is solved to identify
if there is any violated inequality \eqref{form2:cvar}:
\begin{align}
\max\limits_{\vc \in \C}\quad & \cvar_{\alpha}(\vc^\top
\hat{\vG}(\bar \vx,\bar \vy)) - \cvar_{\alpha}(\vc^\top \vZ).
\label{sep}
\end{align}
\citet{Noyan13}, \citet{Kucukyavuz14}, and \citet{Liu15} provide
alternative mixed-integer programming (MIP) formulations for the cut
generation problem (\ref{sep}). Because we can treat the DEF as a
large-scale single-stage multivariate CVaR-constrained problem, the
convergence of the resulting  delayed cut generation algorithm
follows from \cite{Noyan13}.

\begin{remark}\label{ref:sep-conv}
The cut generation formulations in \cite{Noyan13},
\cite{Kucukyavuz14} and \cite{Liu15} are based on the opposite
convention that larger values of random variables, as well as larger
values of risk measures, are considered to be preferable. Those
existing MIP formulations should be altered to reflect this
difference in convention.
\end{remark}

\ignore{
In
particular, the formulations in this study could be modified by
definitions \eqref{def:cvar1}-\eqref{def:cvar2} and the random
outcomes could be replaced by their negatives. Alternatively,
definitions of $\cvar$ could be used to adjust the formulations for
the separation problems.\comment{``In particular'' ile baslayan
kisim sorunlu bence...Bizim formulation zaten
\eqref{def:cvar1}-\eqref{def:cvar2} tanimlarini kullaniyor,
onlarinki bu definitiona gore adapt edilmeli yani bizimki degil...
Ayrica, using definitions of CVaR diyen ikinci kisminin hangi
yaklasim icin oldugu da belli degil. Kisaca, CVaR'in larger values
of risk measures convention'i icin gecerli tanimi dusunulerek
existing MIP'ler bizim outcomelarin negatifi ile kullanilabilir veya
onlarin MIP'leri \eqref{def:cvar1}-\eqref{def:cvar2} tanimlarina
gore adapt edilir...hangi formulationdan bahsedildigi acikca
belirtilse karisiklik kalkar bence}
}

\ignore{
\comment{Note the following: The
study of \citet{Noyan13} is based on the opposition convention that
larger values of random variables, as well as larger values of risk
measures, are considered to be preferable. The formulations should
be altered to reflect this difference...burada formulations should
be altered diye genel yazdim...in particular, biz kendi
modellerimizi degistirdik, cut generation da degistirilebilirdi veya
neg of the random outcome existing MIP'e konabilirdi...istenilirse
detayli bilgi computational study'de verilebilir...}
}

\ignore{\comment{NN: Alternatively, we can solve the separation
problem using the existing formulations for the negatives of the
random outcomes (for the case where the larger realizations are
preferred). After obtaining the $\vc$ vector, you can use the CVaR
definition and constraints as you presented. For example, for the
homeland security budget allocation problem we directly used the
negatives of the random outcomes of interest. Here, it would be nice
to use the random outcomes as they are (since we are introducing new
models), but you can use the negative outcomes while solving the
separation problem (need to be careful with the non-negativity
assumption of some existing MIP formulations, without loss of
generality we need to shift the realizations).}}

\subsection{Delayed Cut Generation with Scenario Decomposition}
\label{dcg-sd}
\ignore{
\comment{large-scale yapiya vurgu yapilip scenario dec
motive edilseydi, $y_s$ var deniliyor ama asil motivasyon not
edilmiyor sanki? scetion direk ``Note that'' seklinde
baslamasa?Senaryoya bagli recourse-dec avoid edilmesi, bunun
two-stage modellerde common yaklasim oldugu, etc...}
}

The deterministic equivalent formulation \eqref{def} contains a
vector of $\vy_s$ variables for each scenario $s\in S$ and its size
grows very fast as the number of scenarios increases. Because the
number of scenarios is typically large, it is generally impractical
to solve the DEF directly, even without the multivariate stochastic
benchmarking constraints. The L-shaped method \citep{VW69}, which is
a Benders-type scenario decomposition algorithm, is arguably the
most commonly used computationally viable method to solve the
classical (risk-neutral) two-stage stochastic programs. We propose a non-trivial
extension of this approach to
solve our multivariate CVaR-constrained two-stage model.

\ignore{
\todo{


In the DEF of a classical (risk-neutral) two-stage stochastic
program, once the first-stage variables are fixed,  the second-stage
problems decompose by scenarios. With this observation, an
adaptation of the Benders decomposition method can be employed to
decompose the problem by scenarios. This method, known as the
L-shaped method \citep{VW69}, is arguably the most commonly used
computationally viable method to solve the large-scale two-stage
stochastic optimization problems.  This method solves a so-called
Benders master problem that contains only the first-stage variables
and auxiliary variables for the second-stage value functions.}}

In the DEF \eqref{def}, constraints
\eqref{DEFForm-cvar-const1}--\eqref{DEFForm-cvar-const2}, which
capture the CVaR relation \eqref{c:cvar} along with the associated
auxiliary variables $\veta$ and $\vw$, create further coupling of
the scenarios in addition to the original coupling constraint
\eqref{eq:def-couple}. Considering this structure, we handle the
$\vx$, $\veta$, and $\vw$ variables in the first stage, decompose
the second-stage problems over scenarios once the first-stage
variables are fixed, and solve iteratively the RMP involving only
the first-stage decision variables and auxiliary decision variables
($\theta_s,~s\in S$) for approximating the optimal second-stage
objective function values. We obtain the following RMP at an
intermediate iteration, where a subset of the scalarization vectors
of cardinality $L$ is generated so far:
\begin{subequations}
\label{master}
\begin{align}
\hspace{-0.5cm}\text{(MP$^L$)}\ \min \quad & f(\vx) + \sum_{s\in S} p_s \theta_s\\
\text{s.t.}\quad & ( \vx, \veta,
\vw, \vtheta) \in \mathcal O,  \label{optcut-cl}\\
& ( \vx, \veta,
\vw) \in \mathcal F ,\label{feascut-cl}\\
& \eta_l + \dfrac{1}{1-\alpha} \sum_{s \in S} p_s w_{sl} \leq \cvar_\alpha (\tilde{\vc}_{(l)}^\top \vZ), \quad \forall~l=1,\ldots,L,\label{cvarcut}\\
&\eqref{DEFForm-cvar-const3}-\eqref{DEFForm-cvar-const4} \text{
(with } \bar{L} \text{ is replaced by } L).
\end{align}
\end{subequations}
Here $\mathcal O$ is a polyhedral set defined by the constraints
(referred to as the {\it optimality cuts}) that give valid lower
bounding approximations of $\vtheta$, and $\mathcal F$ is a
polyhedral set defined by the constraints (referred to as the {\it
feasibility cuts}) that represent the conditions for the first-stage
decision vectors to yield feasible second-stage problems. In what
follows, we give the explicit forms of the inequalities in the sets
$\mathcal O$ and $\mathcal F$.

We start the algorithm with a small subset (could be empty) of the
scalarization set $\C$. At each iteration, given the subset
$\{\tilde \vc_{(1)},\dots ,\tilde \vc_{(L)}\}$, we first solve the
relaxed master problem $(\MP^L)$ to obtain an optimal solution $\bar
\vv:=(\bar \vx,\bar \veta,\bar \vw,\bar \vtheta)$. Using this
solution of the RMP, we solve the second-stage subproblem
\text{(PS$_{s}^L$)} for each scenario $s\in S$ given by
\begin{subequations}
\label{primSub}
\begin{align}
\text{(PS$_{s}^L$)}\quad \bar Q_s^L:=\min \quad & \vq^\top_s  \vy\\
\text{s.t.}\quad & -\tilde{\vc}_{(l)}^\top \vtg_s \vy \ge \tilde{\vc}_{(l)}^\top \vbg_s \bar \vx -\bar \eta_l - \bar w_{sl}, \quad \forall~l=1,\ldots,L, \label{C:p1} \\
&W_s\vy \ge \vh_s-T_s \bar \vx ,\label{C:p2}\\\ &\vy \in
\R_+^{n_2}\label{C:p3},
\end{align}
\end{subequations}
where constraint \eqref{C:p1} is equivalent to constraint \eqref{DEFForm-cvar-const2}
 after partial substitution of the values of the first-stage variables  with $\bar \vx, \bar \veta$ and $\bar \vw$.
\ignore{ The  dual of problem \eqref{primSub} is
\begin{subequations}
\label{dualSub}
\begin{align}
\text{(DS$_{s}^L$)}\quad \max \quad & \vgamma(\vh_s-T_s \bar \vx) + \sum_{l=1}^L \beta_{l}\left(\tilde{\vc}_{(l)}^\top \vbg_s \bar \vx -\bar \eta_l - \bar w_{sl} \right)\\
\text{s.t.}\quad & W_s^\top \vgamma - \sum_{l=1}^L \vtg_s^\top \tilde{\vc}_{(l)} \beta_{l} \leq \vq_s,\\
& \vgamma,\ \vbeta \geq 0,
\end{align}
\end{subequations}
}Let $\beta_{sl},~l=1,\ldots,L,$ and $\vgamma_s$ for $s\in S$, be
the dual variables associated with constraints \eqref{C:p1} and
\eqref{C:p2}, respectively. If the primal subproblem
\text{(PS$_{s}^L$)} is infeasible for some $s \in S$, then let $(\vgamma_s, \vbeta_s)$ provide an extreme ray (direction of
unboundedness) of the corresponding dual feasible region and add the
following Benders feasibility cut to the set $\mathcal F$:
\begin{align}
 0 \geq  \vgamma_s^\top(\vh_s-T_s\vx) +  \sum_{l=1}^L \beta_{sl}^\top \left(\tilde{\vc}_{(l)}^\top \vbg_s \vx -\eta_l - w_{sl}\right).\label{feascut}
\end{align}
If the primal subproblem is feasible and the current estimate of the
optimal second-stage objective value ($\bar \theta_s$) is less than
the actual optimal second-stage objective value ($\bar Q_s^L$), then
let $(\vgamma_s, \vbeta_s)$ provide an optimal dual
extreme point and add the following Benders optimality cut to the
set $\mathcal O$:
\begin{align}
 \theta_s \geq  \vgamma_s^\top(\vh_s-T_s\vx) + \sum_{l=1}^L \beta_{sl}^\top \left(\tilde{\vc}_{(l)}^\top \vbg_s \vx -\eta_l - w_{sl} \right). \label{optcut}
\end{align}

Now suppose that all primal subproblems are feasible with the finite
objective values $\bar Q_s^L, s\in S$ (recall our assumption that
the original problem is not unbounded). In this case, given $(\bar
\vx,\bar \vy)$, we solve the separation problem \eqref{sep} to
obtain an optimal solution  $\vc^*$. If the corresponding optimal
objective value is non-positive, then there is no violation in the
CVaR constraint \eqref{form2:cvar}. Furthermore, recall that in this
case, all second-stage problems are feasible as well. In other
words, the current first-stage solution is feasible for the original
problem, so we update the upper bound on the optimal objective value
(denoted by $u$). On the other hand, if the optimal objective value
of the SP is positive, then $\vc^* \in \C$ creating the most
violation in constraint \eqref{C:p1} is added as the $L+1$th
scalarization vector both to the RMP and the second-stage
subproblems with $L \gets L+1$. We repeat these iterations until
either an infeasible RMP is detected, or the RMP objective function
value is within the given tolerance $\epsilon$ of the upper bound
$u$. The optimality tolerance is specified as a very small positive
number such as $10^{-10}$. The pseudo-code of the proposed
decomposition algorithm is provided in Algorithm \ref{alg}. Note
that the feasible regions of the dual subproblems change depending
on the scalarization vectors included in the formulations at that
iteration. In addition, the RMP also grows both in terms of the
number of variables and constraints as we add more scalarization
vectors to its formulation due to the addition of the $\eta_l$ and
$\vw_l$ variables for each scalarization vector $\tilde\vc_{(l)}$.
As a result, our proposed algorithm can be seen as a \emph{delayed
column and cut generation algorithm}.

\begin{algorithm}[ht]
Given $L$ initial scalarization vectors ($\{\tilde
\vc_{(i)}\}_1^L\subset \mathcal C$) and a small tolerance parameter
$\epsilon>0$, set $converge=false$ and $u \gets \infty$\;
\While{$converge=false$}{Solve $(\MP^L)$ and get the optimal
solution $\bar \vv$ and objective value $F(\MP^L)$\; \eIf{$(\MP^L)$
is infeasible}{$converge=true$. The original risk-averse problem is
infeasible\;} {\eIf{$(1-\epsilon)u\leq F(\MP^L)\leq (1+\epsilon)u$}{
Set $converge=true$. The optimal solution is $\bar
\vv$\;\label{alg:goto} } {
 \For{each scenario $s \in S$}{
Solve the primal subproblem \text{(PS$_{s}^L$)} and get the optimal
objective value $\bar Q_s^L$\; \eIf{(\text{PS$_{s}^L$}) is
infeasible} {Obtain an extreme ray of the dual feasible region
associated with (\text{PS$_{s}^L$}), given by $(\vgamma_s,
\vbeta_s)$, and add the corresponding feasibility cut to $\mathcal
F$\;} {Obtain an optimal extreme point of the dual of
\text{(PS$_{s}^L$)}, given by $(\vgamma_s, \vbeta_s)$, and if $\bar
\theta_s<Q_s^L$, then add the corresponding optimality cut to
$\mathcal O$\;}} \If{\text{(PS$_{s}^L$)} is feasible for all $s \in
S$}{ Solve the separation problem (SP)\; Obtain the optimal solution
$\vc^*$ and optimal objective value $v(\vc^*)$\; \eIf{$v(\vc^*)\leq
0$}{ Current solution is feasible, set $u \gets f(\bar
\vx)+\sum_{s\in S}p_s \bar Q_s^L $\; }{ Set $L \gets L+1$ and
$\tilde{\vc}_{(L)} \gets \vc^*$\; } } }} } \caption{Decomposition
algorithm for the multivariate risk-constrained two-stage
optimization \label{alg}}
\end{algorithm}

\begin{proposition} \label{prop:cvar-finite}
Suppose that the relaxed master problem $(\MP^L)$ is given by
\eqref{master}, the second-stage subproblems \text{(PS$_{s}^L$)} are
given by \eqref{primSub}, the separation problem is given by
\eqref{sep}, the feasibility and optimality cuts, respectively, are
given by \eqref{feascut} and \eqref{optcut}, and the first-stage
solution vector is given by $\bar \vv=(\bar \vx,\bar \veta,\bar \vw,
\bar \vtheta)$. Then, Algorithm \ref{alg} provides either an optimal
solution to problem \eqref{GeneralForm1} or a proof of infeasibility
in finitely many iterations.
\end{proposition}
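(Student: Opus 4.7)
The plan is to decompose the convergence argument into an outer layer bounding the number of distinct scalarization vectors ever added, and an inner layer bounding the number of Benders feasibility/optimality cuts generated between two consecutive scalarization additions. Finiteness of both layers will give finite termination, and analysis of the stopping condition will yield either a proof of infeasibility or an $\epsilon$-optimal solution to \eqref{GeneralForm1}.

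For the outer layer, I would invoke Proposition~\ref{pro:DEFform}: by the analysis of \citet{Noyan13}, the scalarization vectors that can ever produce a strictly positive objective value in the separation problem \eqref{sep} are contained in the finite set $\{\hat\vc_{(1)},\ldots,\hat\vc_{(\bar L)}\}$ of projections of the vertices of the polyhedron $P$. Consequently, after at most $\bar L$ augmentations every candidate scalarization has been added to the RMP, and any subsequent call to (SP) must return $v(\vc^*)\le 0$.

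For the inner layer, I would fix the current scalarization set $\{\tilde\vc_{(1)},\ldots,\tilde\vc_{(L)}\}$ and observe that the dual feasible regions of the primal subproblems (PS$_{s}^L$) are polyhedra whose extreme points and extreme rays form a fixed finite collection independent of the master iterate $\bar\vv$. Each inner iteration that does not append a new scalarization vector must add either a feasibility cut \eqref{feascut} or an optimality cut \eqref{optcut} indexed by a hitherto unused dual extreme ray or extreme point; hence the pool of distinct cuts is exhausted in finitely many iterations, at which point either the stopping criterion is met or a new scalarization must be added. A crucial observation I would verify explicitly is that every previously generated feasibility or optimality cut remains valid when a new scalarization vector $\tilde\vc_{(L+1)}$ is subsequently appended, because these cuts only depend on variables associated with the already-present scalarizations, and the extension merely adds new variables and new constraints without altering existing ones. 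Combining the two layers yields finite termination in at most the product of these two finite bounds.

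At termination, two cases arise. If $(\MP^L)$ is infeasible, then \eqref{GeneralForm1} is infeasible as well, since $(\MP^L)$ is a relaxation of the deterministic equivalent \eqref{def}. Otherwise the stopping condition $(1-\epsilon)u\le F(\MP^L)\le(1+\epsilon)u$ is triggered: all subproblems are feasible with finite objective values $\bar Q_s^L$, the separation problem returns no violation so $\bar\vv$ satisfies every CVaR constraint \eqref{form2:cvar}, and hence $\bar\vv$ (together with the corresponding $\bar\vy_s$) is feasible for \eqref{GeneralForm1}; meanwhile $F(\MP^L)$ is a valid lower bound on the optimum because $(\MP^L)$ is a relaxation, so $\bar\vv$ is $\epsilon$-optimal. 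The main delicate point of the plan is verifying the persistence of retained Benders cuts across scalarization augmentations and ensuring that the interleaving of the two cut-generation loops cannot cycle; all remaining steps are adaptations of the standard L-shaped finite-convergence argument and the finite cut-set argument of \citet{Noyan13}.
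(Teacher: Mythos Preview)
Your argument is correct and uses the same two ingredients as the paper---finiteness of the relevant scalarization vectors and convergence of the L-shaped method---but you organize them as nested outer and inner layers, whereas the paper argues more tersely: it first establishes that every feasibility and optimality cut generated from a relaxed subproblem $(\text{PS}_s^L)$ is valid for the full problem (because $(\text{PS}_s^L)$ relaxes $(\text{PS}_s^{\bar L})$, so its dual bounds remain valid), then observes that in the worst case all $\bar L$ scalarization vectors are eventually added, after which the subproblems are exact and standard L-shaped convergence applies directly. Your layered bound is more explicit about why the algorithm cannot stall between consecutive scalarization additions, a point the paper leaves implicit.

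One phrasing to correct: it is not true that ``the scalarization vectors that can ever produce a strictly positive objective value in \eqref{sep} are contained in the finite set $\{\hat\vc_{(1)},\ldots,\hat\vc_{(\bar L)}\}$''---many non-vertex $\vc\in\C$ can yield positive violation. What is true, and what the paper explicitly invokes, is that the procedure of \citet{Noyan13} can be used to guarantee that the $\vc^*$ returned by the separation routine is always one of these finitely many vertex projections; without that guarantee on the output of the separation problem, your outer-layer finiteness could fail.
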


\begin{proof}
First, observe that the optimality cuts \eqref{optcut} generated
from subproblems \text{(PS$_{s}^L$)} for $s\in S$ and the subset
$\{\tilde \vc_{(1)},\dots, \tilde \vc_{(L)}\}$ of $\C$ are valid,
because subproblems \text{(PS$_{s}^L$)} are relaxations of the
original subproblems \eqref{intermediate2} given by
(\text{PS$_{s}^{\bar L}$}), and hence the duals of the relaxed
subproblems provide valid lower bounds on the optimal second-stage
objective values. Similarly, the feasibility cuts \eqref{feascut}
obtained from the relaxed subproblems are valid for the original
subproblems \eqref{intermediate2}. At an intermediate iteration, if
there is a violation in the CVaR constraint \eqref{form2:cvar}, then
the exact solution of the SP guarantees that a corresponding
violating scalarization vector is identified and the associated
inequalities and variables are added to the RMP. Furthermore,
\cite{Noyan13} provide a procedure to guarantee that the $\vc^*$
vector generated from the SP is a vertex solution as defined in
Proposition \ref{pro:DEFform}. Next, recall that the number of
scalarization vectors of interest, $\bar L$, is finite. Hence, in
the worst case, in a finite number of iterations, all $\bar L$
scalarization vectors will be added, and the second-stage problems
become exact. At this point, the convergence of the algorithm
follows directly from the convergence of the classical L-shaped
method \cite[]{VW69}. 
\end{proof}

\section{Two-Stage Optimization with a Multivariate Stochastic
Ordering} \label{sec:ssd} The solution algorithms proposed in
Section \ref{sec:general-model} can also be applied to the two-stage
stochastic programs with the multivariate preference constraints
based on increasing convex order (ICO). The counterpart of ICO in
the opposite convention, where larger values of random variables are
preferred, is referred to as the SSD. Similar to the CVaR case, the
univariate ICO relation can be extended to the multivariate case by
considering a family of linear scalarization functions; more
specifically, a random vector $\vX$ is said to dominate another
random vector $\vZ$ with respect to the ICO and the scalarization
set $\C$ if
\begin{equation}
\E\left(\left[\vc^\top \vX - \eta \right]_+\right) \leq
\E\left(\left[\vc^\top \vZ - \eta \right]_+\right), \quad
\forall~\eta \in \R,\ \vc \in \C. \label{cons:ico1}
\end{equation}
The risk-averse two-stage optimization model of \citet{DW16}
utilizes this multivariate stochastic preference relation to compare
the decision-based random outcome vector $\hat{\vG}(\vx,\vy)$ with
the benchmark outcome vector $\vZ$. In particular, the two-stage
stochastic programming model with the multivariate ICO constraints
takes the form of \eqref{GeneralForm2}, where the multivariate CVaR
relation \eqref{form2:cvar} is replaced by \eqref{cons:ico1} with
$\vX=\hat{\vG}(\vx,\vy)$. Assuming again that $\vZ$ has finitely
many realizations $\vz_1,\ldots,\vz_{|T|}$ with associated
probabilities $\tilde{p}_i,~i \in T$, the inequality
(\ref{cons:ico1}) can be equivalently stated as
\begin{equation}
\E\left(\left[ \vc^\top \vX - \vc^\top \vz_t \right]_+\right) \leq
\E\left(\left[\vc^\top \vZ - \vc^\top \vz_t \right]_+\right), \quad
\forall~t \in T,\ \vc \in \C.\label{c:ssd}
\end{equation}
Relation (\ref{c:ssd}) directly follows from \eqref{cons:ico1} due
to the well-known result \citep[][]{Dentcheva03} that for finite
probability spaces the univariate SSD relation remains valid if the
corresponding expected shortfall constraints (in our convention,
constraints (\ref{cons:ico1}) featuring the expected excess values
for a particular $\vc$) only required for the realizations of the
benchmark random variable (instead of all $\eta \in \R$).

For finite probability spaces, \citet{Mello09} show that it is
sufficient to consider a finite number of scalarization vectors
$\{\hat \vc_{(1)},\dots,\hat \vc_{(\bar L)}\}$ in relation
\eqref{c:ssd}, specifically the projections of the extreme points of
polyhedra $P_t,~t\in T$, defined for each $t \in T$ as $P_t = \{w_i
\geq \vc^\top \vz_i - \vc^\top \vz_t,\ i \in T,\ \vc \in \C,\ \vw
\in \R^{|T|}_+ \}$. For an alternative finite representation of
\eqref{c:ssd}, which is based on the vertices of the polyhedron $P$
defined in Proposition \ref{pro:DEFform}, we refer to
\cite{Noyan13}. The fact that these finite representations of
\eqref{c:ssd} are only characterized by the given benchmark vector
allows us to develop computationally tractable DEFs of the
optimization models featuring the multivariate ICO relation as a
benchmarking constraint. In particular, for the two-stage
multivariate ICO-constrained problem we can obtain the following DEF
with finitely many constraints:
\begin{subequations}
\label{def-ssd}
\begin{align}
\min~~&f(\vx)+\sum_{s \in S} p_s \vq^\top_s  \vy_s  \\
\text{s.t.}~~&  \sum_{s \in S} p_s w_{slt} \leq \sum_{i \in T} \tilde{p}_i \left[\hat{\vc}_{(l)}^\top \vz_i - \hat{\vc}_{(l)}^\top \vz_t \right]_+, \quad \forall~t \in T,\ l=1,\ldots,\bar L, \label{ssd:defcut}\\
& w_{slt} \geq \hat{\vc}_{(l)}^\top \hat \vg_s(\vx,\vy_s) - \hat{\vc}_{(l)}^\top \vz_t,\quad \forall~s \in S,\ t\in T,\ l=1,\ldots,\bar L,\label{ssd:defcut2} \\
& w_{slt} \geq 0,\quad \forall~s\in S,\ t \in T,\ l=1,\ldots,\bar L, \label{ssd:defcut3}\\
&\vx \in \X,\quad \eqref{eq:def-couple}-\eqref{eq:def-couple2}.
\end{align}
\end{subequations}

Although the number of inequalities
\eqref{ssd:defcut}--\eqref{ssd:defcut2} is finite, it is possibly
exponential. Hence, as in the $\cvar$-constrained optimization, the
ICO constraints can be added as needed using a delayed cut
generation approach. As in Section \ref{dcg-def}, we start with
solving the DEF with a subset of constraints
\eqref{ssd:defcut}--\eqref{ssd:defcut2} for $l=1,\ldots, L$. For a
given solution $\bar \vv:=(\bar \vx,\bar \vw,\bar \vy)$ to this
relaxed problem, we check if there is a violation in \eqref{c:ssd}
(with $\vX=\hat{\vG}(\bar \vx,\bar \vy)$). Note that constraints
(\ref{c:ssd}) are defined for each scalarization vector and for each
realization of $\vZ$, as such possible violations in constraints
\eqref{c:ssd} should be checked for all $\vz_t,~t \in T$. In line
with these discussions, given $(\bar{\vx},\bar \vw,\bar{\vy})$, the
separation problem corresponding to the $t^{th}$ realization of
$\vZ$ becomes
\begin{align}
\min\limits_{\vc \in \C} \quad & \E\left(\left[\vc^\top \vZ -
\vc^\top \vz_t \right]_+\right) - \E\left(\left[ \vc^\top
\hat{\vG}(\bar{\vx},\bar{\vy}) - \vc^\top \vz_t
\right]_+\right).\label{c:ssd-sep}
\end{align}
This cut generation problem can be solved using the adaptations of the MIP formulations
provided in \cite{Mello09}, \cite{Kucukyavuz14}, and
\cite{Noyan16} for the opposite convention that larger outcomes are preferred.

As in the $\cvar$ case, the structure of the ICO constraints can be
exploited to further decompose the formulation \eqref{def-ssd} over
scenarios. Observe that constraints \eqref{ssd:defcut} and
\eqref{eq:def-couple} couple the scenarios together. However, if we
handle the original first-stage variables $\vx$ and the auxiliary
$\vw$ variables in the first stage, then the second-stage problems
decompose over scenarios once the first-stage variables are fixed.
Therefore, we can apply Algorithm \ref{alg} with the updated
definitions of the problems $(\MP^L)$, \text{(PS$_{s}^L$)}, and
(SP), the feasibility and optimality cuts, and the first-stage
solution vector $\vv$, which we describe next.

The RMP at an intermediate iteration, where a subset of the
scalarization vectors of cardinality $L$ is generated so far, is
formulated as
\begin{subequations}
\label{master-ssd}
\begin{align}
\hspace{-0.5cm}\text{(MP$^L$)}\ \min \quad & f(\vx) + \sum_{s\in S} p_s \theta_s\\
\text{s.t.}\quad & ( \vx,
\vw, \vtheta) \in \mathcal O,   \label{ssd:optcut-cl}\\
& ( \vx,
\vw) \in \mathcal F ,\label{ssd:feascut-cl}\\
& \vx \in \X, \quad \eqref{ssd:defcut},\eqref{ssd:defcut3}\text{
(with } \bar{L} \text{ is replaced by } L).
\end{align}
\end{subequations}
Given a first-stage solution vector $\bar \vv=(\bar \vx,\bar \vw,
\bar \vtheta)$ of RMP and the $L$ scalarization vectors, the
second-stage subproblem under $s\in S$ takes the form of
\begin{subequations}
\label{primSub-ssd}
\begin{align}
\text{(PS$_{s}^L$)}\quad \bar Q_s^L:= \min \quad & \vq^\top_s  \vy\\
\text{s.t.}\quad & -\tilde{\vc}_{(l)}^\top \vtg_s \vy \ge \tilde{\vc}_{(l)}^\top \vbg_s \bar \vx - \tilde{\vc}_{(l)}^\top \vz_t - \bar w_{slt}, \quad \forall~t \in T,\ l=1,\ldots,L, \label{C:p1-ssd} \\
&\eqref{C:p2}-\eqref{C:p3}.
\end{align}
\end{subequations}
Here, constraint \eqref{C:p1-ssd} is equivalent to constraint
\eqref{ssd:defcut2} with partial substitution of the values of the
first-stage variables $\bar \vx$ and $\bar \vw$. Denoting the dual
vectors associated with constraints \eqref{C:p1-ssd} and
\eqref{C:p2} by $\beta_{stl}, t\in T, l=1,\dots, L,$ and
$\vgamma_s,~s\in S,$ the feasibility and optimality cuts are given
by
\begin{align} 0 \geq  \vgamma_s(\vh_s-T_s\vx) +  \sum_{t\in
T}\sum_{l=1}^L \beta_{stl}\left(\tilde{\vc}_{(l)}^\top \vbg_s \vx
-\tilde{\vc}_{(l)}^\top \vz_t - w_{slt}\right) \text{ and
}\label{feascut-ssd}\\
 \theta_s \geq  \vgamma_s(\vh_s-T_s\vx) +  \sum_{t\in T}\sum_{l=1}^L \beta_{stl}\left(\tilde{\vc}_{(l)}^\top \vbg_s \vx -\tilde{\vc}_{(l)}^\top \vz_t - w_{slt} \right). \label{optcut-ssd}
 \end{align}

 \ignore{
As in the CVaR case, $\beta$ and $\gamma$ variables represent a dual extreme ray for a feasibility cut, while they provide an optimal dual extreme
 point for an optimality cut.
 The  Benders cuts \eqref{feascut-ssd} and
\eqref{optcut-ssd} are added to the sets $\mathcal F$ and $\mathcal O$, depending on
whether the primal problem is infeasible or feasible, respectively.
When all second-stage subproblems are feasible we solve the
separation problem \eqref{c:ssd-sep}, given the current solution of
the RMP, to check whether there is a scalarization vector for which
the ICO constraint \eqref{c:ssd} is violated. If such a
scalarization vector is identified, the RMP formulation is augmented
accordingly.
}

\ignore{
We start the algorithm with a small subset (could be empty) of the
scalarization set $\C$. At each iteration, given the set of
scalarization vectors $\{\tilde \vc_{(1)},\dots ,\tilde\vc_{(L)}\}$,
we first solve the master problem $(\MP^L)$ to obtain the optimal
solution $\bar \vv:=(\bar \vx,\bar \vw,\bar \vtheta)$.  The RMP is
formulated as
\begin{subequations}
\label{master-ssd}
\begin{align}
\hspace{-0.5cm}\text{(MP$^L$)}\ \min \quad & f(\vx) + \sum_{s\in S} p_s \theta_s\\
\text{s.t.}\quad & ( \vx,
\vw, \vtheta) \in \mathcal O,   \label{ssd:optcut-cl}\\
& ( \vx,
\vw) \in \mathcal F ,\label{ssd:feascut-cl}\\
&  \sum_{s \in S} p_s w_{slt} \leq \sum_{i \in T} \tilde{p}_i \left[\tilde \vc^\top \vz_t -\tilde \vc^\top \vz_i\right]_+, \quad \forall~t \in T,\ l=1,\ldots,L, \label{cvarcut-ssd}\\
& w_{slt}\geq 0,\quad \forall~s\in S,\ t \in T,\ l=1,\ldots,L,\\
&\vx \in \X.
\end{align}
\end{subequations}
Here $\theta_s, s\in S,$ are decision variables for approximating
the optimal second-stage objective function values, $\mathcal O$ is
a polyhedral set defined by the constraints that give valid lower
bounding approximations of $\vtheta$, referred to as the {\it
optimality cuts}, and $\mathcal F$ is a polyhedral set defined by
the constraints, referred to as the {\it feasibility cuts}, that
represent the conditions for the feasibility of the second-stage
problems given the candidate first-stage solutions. In what follows,
we will give the explicit forms of inequalities in the sets
$\mathcal O$ and $\mathcal F$.

Given a first-stage solution $\bar \vv$ to the master problem
\text{(MP$^L$)} at the current iteration, we solve the second-stage
subproblem for each scenario which takes the following form under
$s\in S$ given the $L$ scalarization vectors:
\begin{subequations}
\label{primSub-ssd}
\begin{align}
\text{(PS$_{s}^L$)}\quad \bar Q_s^L:= \min \quad & \vq^\top_s  \vy\\
\text{s.t.}\quad & -\tilde{\vc}_{(l)}^\top \vtg_s \vy \ge \tilde{\vc}_{(l)}^\top \vbg_s \bar \vx - \tilde{\vc}_{(l)}^\top \vz_t - \bar w_{slt}, \quad \forall~t \in T,\ l=1,\ldots,L, \label{C:p1-ssd} \\
&W_s\vy \ge \vh_s-T_s \bar \vx ,\quad \vy \in \R_+^{n_2},
\label{C:p2-ssd}
\end{align}
\end{subequations}
where constraint \eqref{C:p1-ssd} is equivalent to constraint
 \eqref{ssd:defcut2} with partial substitution of the values of the first-stage variables  $\bar \vx$ and $\bar \vw$.

Let $\beta_{stl}, t\in T, l=1,\dots, L,$ and $\vgamma_s$ for $s\in
S$ be the dual variables associated with constraints
\eqref{C:p1-ssd} and \eqref{C:p2-ssd}, respectively. If the primal
subproblem \text{(PS$_{s}^L$)} is infeasible for some $s \in S$,
then let these $\beta$ and $\gamma$ variables provide an extreme ray
of the corresponding dual feasible region and add the following
Benders feasibility cut:
\begin{align} 0 \geq  \vgamma_s(\vh_s-T_s\vx) +  \sum_{t\in
T}\sum_{l=1}^L \beta_{stl}\left(\tilde{\vc}_{(l)}^\top \vbg_s \vx
-\tilde{\vc}_{(l)}^\top \vz_t - w_{slt}\right). \label{feascut-ssd}
\end{align}If the primal
subproblem is feasible and the current estimate ($\bar \theta_s$) of
the optimal second-stage objective function is less than the actual
optimal second-stage objective value $\bar Q_s^L$, then let the
$\beta$ and $\gamma$ variables provide an optimal dual extreme point
and add the following optimality cut to the set $\mathcal O$:
\begin{align}
 \theta_s \geq  \vgamma_s(\vh_s-T_s\vx) +  \sum_{t\in T}\sum_{l=1}^L \beta_{stl}\left(\tilde{\vc}_{(l)}^\top \vbg_s \vx -\tilde{\vc}_{(l)}^\top \vz_t - w_{slt} \right). \label{optcut-ssd}
 \end{align}

Now consider the case that all primal subproblems are feasible with
finite objective values $\bar Q_s^L, s\in S$ (recall our assumption
that the original problem is not unbounded). In this case, we solve
the separation problem \eqref{sep} for the given $(\bar \vx,\bar
\vy)$ to obtain the optimal solution  $\vc^*$. If the objective
value of the separation problem is non-positive, then there is no
violation in the ICO constraint \eqref{c:ssd}. Furthermore, recall
that in this case, all second-stage problems are feasible as well.
In other words, the current first-stage solution is feasible for the
original problem, so we update the upper bound on the optimal
objective value($u$). On the other hand, if the optimal objective
value of the separation problem is positive, then $\vc^* \in \C$
creating the most violation in constraint \eqref{c:ssd} is added as
the $L+1$th scalarization vector both to the master problem and the
subproblems with $L\gets L+1$. We repeat these iterations until
either an infeasible master problem is detected, or the master
problem objective is within the given tolerance $\epsilon$ of the
upper bound.
}

\begin{proposition}
Suppose that the relaxed master problem $(\MP^L)$ is given by
\eqref{master-ssd}, the second-stage subproblems
\text{(PS$_{s}^L$)} are given by \eqref{primSub-ssd}, the separation
problem is given by \eqref{c:ssd-sep}, the feasibility and
optimality cuts, respectively, are given by \eqref{feascut-ssd} and
\eqref{optcut-ssd}, and the first-stage solution vector is given by
$\bar \vv=(\bar \vx,\bar \vw, \bar \vtheta)$. Then, Algorithm
\ref{alg} provides either an optimal solution to problem
\eqref{def-ssd} or a proof of infeasibility in finitely many
iterations.
\end{proposition}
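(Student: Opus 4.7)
The plan is to mirror the argument of Proposition \ref{prop:cvar-finite}, using the analogous ingredients for the ICO setting. First I would establish the validity of the Benders cuts. Since $\text{(PS}_s^L)$ in \eqref{primSub-ssd} is a relaxation of the original per-scenario subproblem (the one obtained from \eqref{def-ssd} after fixing all first-stage variables, which involves all $\bar L$ scalarization vectors), any feasible dual solution -- and in particular any extreme ray or optimal extreme point of the dual of $\text{(PS}_s^L)$ -- yields an inequality that is satisfied by every feasible first-stage vector of the true problem. Hence \eqref{feascut-ssd} is a valid necessary condition for feasibility and \eqref{optcut-ssd} is a valid lower bound on $\theta_s$.

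Next I would argue correctness of the separation step. The reformulation \eqref{c:ssd} shows that the multivariate ICO relation, restricted to a fixed benchmark, need only be checked at each realization $\vz_t,~t\in T$; this is exactly what the family of separation problems \eqref{c:ssd-sep} accomplishes. If none of the $|T|$ separation problems returns a negative objective value, then $\hat{\vG}(\bar\vx,\bar\vy)$ dominates $\vZ$ with respect to ICO and the corresponding scalarization set, so $(\bar\vx,\bar\vy)$ is feasible for the original formulation and the upper bound $u$ is updated accordingly. Otherwise, a violating vertex $\vc^*$ is identified; here I would invoke the finite vertex representation of \citet{Mello09} (equivalently \citet{Noyan13}) together with the vertex-recovery procedure used in the proof of Proposition \ref{prop:cvar-finite} to guarantee that $\vc^*$ is one of the finitely many projections $\hat\vc_{(1)},\dots,\hat\vc_{(\bar L)}$ of the extreme points of $P_t$.

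The finite termination argument then proceeds in two layers. Because $\bar L$ is finite and each iteration either adds a new scalarization vector from this finite pool, generates a new Benders cut from a finite pool of dual extreme rays/points, or terminates, the outer loop can append at most $\bar L$ scalarization vectors. Once all of them have been added, the RMP \eqref{master-ssd} together with the subproblems \eqref{primSub-ssd} coincides with the full DEF \eqref{def-ssd}, and no further violating scalarization can be produced by separation. From that point on, the algorithm reduces to the classical L-shaped method applied to \eqref{def-ssd}, whose finite convergence is standard \citep{VW69}. Combining these layers yields termination in finitely many iterations with either an optimal solution to \eqref{def-ssd} or a certificate of infeasibility (an infeasible RMP).

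The main obstacle is the separation step: unlike the CVaR case, \eqref{c:ssd-sep} is defined per realization $t \in T$, and we need that the extreme-point/vertex characterization of violating scalarization vectors still holds so that only finitely many distinct $\vc^*$ can ever be produced across all iterations and all values of $t$. Once this finiteness is secured by appealing to the finite representation result of \citet{Mello09}, the remainder of the argument is essentially a verbatim adaptation of the CVaR proof and does not raise new difficulties.
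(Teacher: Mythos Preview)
Your proposal is correct and follows essentially the same approach as the paper: the paper's own proof consists of a single sentence stating that the argument is similar to that of Proposition~\ref{prop:cvar-finite}, and you have faithfully unpacked that analogy, including the validity of the relaxed Benders cuts, the vertex-recovery for the separation step via the finite representation of \citet{Mello09}, and the reduction to the classical L-shaped method once all $\bar L$ scalarization vectors are present. Your explicit identification of the per-realization structure of \eqref{c:ssd-sep} and the need for finiteness of the pool of candidate $\vc^*$ across all $t\in T$ is a useful clarification that the paper leaves implicit.
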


\begin{proof}
The proof is similar to that of Proposition \ref{prop:cvar-finite}. 
\end{proof}

Finally, we highlight four major advantages of our proposed solution
framework over the existing methods \citep{DW16}. First, our method
reformulates the risk-averse model of interest as a risk-neutral
two-stage stochastic program without using the Lagrangian relaxation
of the multivariate ICO constraints, and consequently, manages to
solve the problem within a single Benders decomposition framework.
It maintains a single master problem to solve successively redefined
(based on the iteratively enlarged subset of the scalarization
vectors) risk-neutral relaxations of the problem. Recall that using
a subset of $\C$ provides a relaxation of the problem. These
risk-neutral formulations are constructed by using the decomposable
structure of the ICO constraints over scenarios. \citet{DW16} also
construct successive relaxations by considering a subset of $\C$;
however, they obtain approximate risk-neutral relaxations by using
the Lagrangian relaxation of the multivariate ICO constraints. In
particular, at each iteration of their algorithms, a risk-neutral
approximation with an updated Lagrangian objective function is
solved (to calculate the dual function given a set of Lagrangian
multipliers) using decomposition methods such as Benders
decomposition. The need for solving a separate risk-neutral
two-stage model at each iteration, in addition to the well-known
computational challenges in solving the non-differentiable
Lagrangian dual problem, could impose significant computational
difficulties. Second, the Lagrangian-based existing algorithms are
not exact even for the finite probability spaces; they are shown to
finitely converge to an $\varepsilon$-feasible (with respect to the
ICO constraint) $\varepsilon$-optimal solution both for continuous
and finite probability spaces. In contrast, our finitely-convergent
algorithm provides an exact (optimal and feasible) solution for
finite probability spaces. Third, the existing algorithms do not
provide valid upper bounds on the optimal objective value at
intermediate stages, because the Lagrangian dual at an iteration is
constructed using the information on a subset of  $\C$. In contrast,
Algorithm \ref{alg} provides valid upper bounds (in line 20) at
intermediate stages. Fourth, our method applies to problems that
contain discrete variables in the first stage. In contrast, the
Lagrangian-based methods cannot guarantee the integer feasibility
(see the primal solution recovery step of the algorithms in
\citealp{DW16}). In summary, we contribute to the literature by
providing a new computationally tractable and exact solution
algorithm for the multivariate ICO-constrained two-stage models with
integer variables in the first stage under finite probability
spaces.

\ignore{
We highlight four major advantages of our proposed framework
when compared to existing methods that rely on Lagrangian
relaxation. First, our method applies to problems that contain
discrete variables in the first stage. In contrast, Lagrangian-based
methods cannot guarantee integer feasibility (see the primal
solution recovery step of the algorithms in \citet{DW16}).

Second, as noted by \citet{DW16}, because the Lagrangian dual in the
intermediate iterations is constructed using the information on a
subset of scalarization vectors, it does not provide valid upper
bounds on the problem in intermediate stages. In contrast, Algorithm
\ref{alg} provides valid upper bounds (in line 20) in intermediate
stages.

Third, Algorithm \ref{alg}, at termination, provides an exact
(optimal and feasible) solution to the problem with a finite
distribution. On the other hand, the algorithm proposed in finitely
converge to an $\varepsilon$-feasible (with respect to the ICO
constraint) $\varepsilon$-optimal solution both for continuous and
finite probability distributions.

Fourth, and most significant, our proposed method manages to solve
the risk-averse two-stage stochastic programming problem of interest
within a single Benders decomposition framework; it maintains a
single master problem to solve successively redefined risk-neutral
relaxations of the problem. Recall that using a subset of $\C$
provides a relaxation of the problem. These risk-neutral
formulations are constructed by using the decomposable structure of
the ICO constraints, which leads to an exact scenario
decomposition-based algorithm. \citet{DW16} also construct successive
relaxations by considering a subset of the scalarization vectors
(featured in the stochastic ordering relation); however, they obtain
approximate risk-neutral relaxations by using the Lagrangian
relaxation of the multivariate ICO constraints. In particular, they
provide an approximate solution algorithm even for the finite
probability spaces, where at each iteration, a risk-neutral
two-stage problem  with an updated Lagrangian objective function is
solved using decomposition methods such as Benders decomposition.
Their algorithms are shown to finitely converge to an
$\varepsilon$-feasible $\varepsilon$-optimal solution. Because the
objective function of the second-stage problem relies on the
Lagrangian relaxation of the preference constraint based on a subset
of scalarization vectors, the risk-neutral approximations need to be
solved with a Benders approach from scratch at each iteration. This
in addition to the  well-known computational challenges in solving
the non-differentiable Lagrangian dual problem, could impose
significant computational difficulties.

Thus, we  contribute to the literature by providing a new
computationally tractable and exact solution algorithm for the
multivariate ICO-constrained two-stage models with integer variables
in the first stage. }

\ignore{
***

Unlike \citet{DW16}, we do not assume that the first-stage decisions
are continuous, and more importantly, we reformulate the risk-averse
model of interest as a risk-neutral two-stage stochastic program
without using the Lagrangian relaxation of the multivariate SSD
constraints, which allows us to rely only on a Benders decomposition
approach and develop a computationally more promising and exact
solution algorithm. In particular, \citet{DW16} construct successive
risk-neutral approximations of the risk-averse two-stage model, each
of which is solved to calculate the dual function given a set of
Lagrangian multipliers and a subset of the scalarization vectors
(featured in the stochastic ordering relation). It is well-known
that the Lagrangian methods generally suffer from the computational
challenges in solving the non-differentiable Lagrangian dual
problem. In addition, at each iteration of the algorithms proposed
by \citet{DW16}, it is required to solve a computationally demanding
risk-neutral two-stage stochastic program by using a Benders
decomposition algorithm. Their algorithms are shown to finitely
converge to an $\varepsilon$-feasible $\varepsilon$-optimal
solution. In contrast, our proposed approach is exact (both in terms
of optimality and feasibility) and  manages to solve successive
risk-neutral relaxations (redefined based on the iteratively
enlarged subset of the scalarization vectors) within only a single
Benders decomposition framework. Thus, we also contribute to the
literature by providing a new computationally tractable and exact
solution algorithm for the multivariate SSD-constrained two-stage
models.
***
}
\ignore{
Algorithm \ref{alg}, even in
intermediate stages, gives feasible solutions to the two-stage
problem of interest in which the first stage contains integer
variables, whereas Lagrangian-based methods cannot guarantee integer
feasibility (see the primal solution recovery step of the algorithms
in \citet{DW16}). In addition,  as noted by \citet{DW16}, because the
Lagrangian dual in the intermediate iterations is constructed using
the information on a subset of scalarization vectors, it does not
provide valid upper bounds on the problem. Hence, it is not clear if
optimality or feasibility can be guaranteed if the algorithm is
stopped early due to the slow convergence in practice.
}

\section{A Stochastic Optimization Model for Pre-disaster Relief Network Design}\label{sec:disaster-model}
An effective and sound pre-disaster relief network design calls for
modeling the risk associated with the high level of uncertainty
inherent in rarely occurring disaster events \cite[see, e.g.,][]{Noyan12} and considering multiple and possibly conflicting
performance criteria such as responsiveness and equity
\cite[see, e.g.,][]{Vitoriano11,Huang2012,Gutjahr16}. Moreover,  a
two-stage decision making framework is beneficial in this context,
since the pre-positioning design decisions must be made before a
disaster strikes, whereas the relief distribution decisions should
be made in the post-disaster stage. Motivated by the significance of
developing such effective pre-disaster plans \cite[see, e.g.,][]{Balcik08,Apte2010}, we apply our proposed approach to a
stochastic pre-disaster relief network design problem. In this
section, we first briefly review the relevant literature, and then
describe the problem setting and present the corresponding
mathematical programming formulations.

\ignore{In particular, we introduce a new risk-averse optimization
model, which mainly determines the locations of the response
facilities along with their inventory levels before a disaster
strikes...Raws makalesini tartisirken bu kararlar anlatiliyor, sonra
we develop a risk-averse variant of it deyince bizim kararlar da
belli, zaten sonra model kisminda tekrar acikliyoruz}

There is a rich and growing literature on developing stochastic
programming models for humanitarian logistics \cite[e.g.,][]{Celik2012,Liberatore2013}. The majority of the existing
studies focus on pre-disaster relief network design and develop
risk-neutral two-stage stochastic programs \cite[e.g.,][]{Balcik08,Rawls2010,Apte2010,Doyen2011}. However, this
extensive literature includes only a few studies \cite[e.g.,][]{Rawls2011,Noyan12,Hong2015,OzgunThesis16} that provide
risk-averse stochastic models, and lacks models with multivariate
risk constraints. Here, we mainly discuss the studies that are most
closely related to ours. \citet{Rawls2010} develop a risk-neutral
two-stage stochastic programming model; in the first-stage it
determines the cost-wise optimal decisions on locations and
capacities of the response facilities, and the inventory levels of
relief supplies under uncertainty in demand, damage levels of
pre-stocked supplies, and transportation network conditions. Their
second-stage problem is formulated as a classical network flow
model, which involves detailed distribution decisions representing
the flow of relief supplies on each arc. \citet{Noyan12} obtains a
risk-averse version of this model by incorporating CVaR as the risk
measure on the total cost in addition to its expectation. There also
exist chance-constrained variants \citep{Rawls2011,Hong2015}. A
recent study \cite{OzgunThesis16} proposes a more elaborate
risk-averse extension, which features a mean-risk objective function
on the random total cost with CVaR being the risk measure (as in
\citealp{Noyan12}), and enforces a joint chance constraint on the
feasibility of the second-stage problem (as in \citealp{Hong2015}). In
contrast to the other variants, the model of \citet{OzgunThesis16}
relies on an alternative formulation of the second-stage problem,
which focuses on assigning the demand points to the facilities
instead of determining the detailed arc-flow decisions. In our
study, we follow this practically meaningful assignment-based
modeling approach for the second-stage problem, and develop a new
risk-averse variant of the widely-cited model of \citet{Rawls2010}.
\ignore{{Rawls2010} tekrarlayarak bitirdim ki bunu soyledikten sonra
diger section da buradan devam ediyor, bence boyle iyi...bu vurgu
introductiondan da silinmisti}

\ignore{This assignment-based formulation relies on a restricted
version of the classical flow conservation with the objective of
providing a more structured flow considering the coverage
issues...kisitlar aciklanirken gececek}

\ignore{In summary, we contribute to the humanitarian relief
literature by introducing a novel risk-averse optimization model,
which provides a flexible and tractable way of considering the
decision makers' risk preferences based on multiple stochastic
criteria.}

\ignore{directly bu modeli extend ettigimizi soylemek yukaridan
sonra natural bence...boylece direk additional kritere
atlayabiliriz..CVaR ile olan aciklama da onemli ve gerekli, comp
study'de buna odaklanildigindan, bu aciklamadan sonra bir daha comp
study'de aciklama vermeye gerek yok}

\vspace{0.5cm}

\noindent{\bf Problem Description and Mathematical Formulations.} We
extend the model of \citet{Rawls2011} by incorporating the
multivariate CVaR relation into the first-stage to evaluate the
decisions based on additional multiple stochastic criteria. In
particular, we consider the following additional stochastic
measures: the maximum proportion of unsatisfied demand and a
responsiveness measure based on the total delivery amount-based
average travel time. With the term responsiveness we refer to the
expeditiousness of the response. A large variety of criteria have
been employed in humanitarian logistics and they can be grouped in
three main categories \cite[see, e.g.,][]{Huang2012,Gutjahr16}:
\textit{efficiency} (related to cost), \textit{efficacy or
effectiveness} (related to providing quick and sufficient
distribution) and \textit{equity} (related to providing a fair
service). According to this classification, our model addresses the
issues about efficiency and efficacy of the relief operations using
a weighted-sum based objective, which aggregates the costs of
opening facilities, demand shortages, and purchasing and shipping
the relief supplies. In addition, it addresses the issues about
responsiveness and equity (in terms of supply allocation) via the
multivariate CVaR constraints. The
CVaR-based relation is preferred, since it is a natural relaxation
of its SSD-counterpart. Multivariate SSD constraints are typically very demanding
and they can be excessively hard to satisfy in practice.

\ignore{can be measured based on the response times, travel
distances, demand satisfaction levels, etc... in terms of supply
allocation and response times}

We model the relief distribution system using a network, where $I$
and $J$ denote the sets of nodes representing the demand and
candidate facility locations, respectively; we assume without loss
of generality that $J \subseteq I$. The set $L$ denotes the multiple
facility types, a facility of type $l$ has a given capacity level
$K_l$. Setting up a facility of type $l$ at node $j$ has a fixed
cost $F_{jl}$ and a unit variable acquisition cost $a_j$. We
consider a single commodity \cite[as in, e.g.,][]{Hong2015}, since
the relief items can be supplied as bundles. The demand values,
travel times, demand shortage penalty costs, and damage levels of
supplies are assumed to be random and their realizations are
represented by a finite set of scenarios $S$ with probabilities
$p_s,~s\in S$. We next introduce notation for the realizations of
these random parameters under scenario $s$: demand at node $i$ is
$d_i^{s}$, travel time from node $j$ to node $i$ is $\nu_{ji}^s$,
unit shortage penalty cost is $\pi^s$, unit shipment cost from node
$j$ to node $i$ is $c_{ji}^{s}$, proportion of undamaged supplies at
node $j$ is $\gamma_j^{s}$. In contrast to \cite{Rawls2010}, each
demand location can be served only by a facility node, which
satisfies a responsiveness requirement. In particular, we consider a
common upper bound on travel times to construct the
scenario-dependent coverage sets: $N^s_i$ and $M^s_j$, respectively,
denote the set of facility nodes that can cover demand node $i$ and
the set of demand nodes that can be served by facility node $j$
under scenario $s$. Enforcing a common threshold serves the
objective of providing equitable service in terms of response times.

\ignore{In our two-stage decision making framework, the first-stage
problem determines the sizes and locations of the response
facilities and the amount of relief supplies pre-positioned at each
facility, whereas the second-stage problem focuses on decisions on
the assignments of the demand points to facilities and the delivery
amounts to the demand points under each scenario.}

In our two-stage decision making framework, the first-stage
decisions are represented using the following notation: $x_{jl}= 1$
if a facility of size category $l \in L$ is located at node $j \in
J$, and $x_{jl}= 0$ otherwise; $R_{j}$ denotes the amount of
supplies pre-located at node $j \in J$. The notation for the
second-stage decisions under scenario $s\in S$ is as follows:
$y^s_{ji}$ denotes the amount of supplies delivered to demand node
$i \in I$ from node $j \in J$; $u^s_{i}$ designates the amount of
the unsatisfied demand at node $i \in I$; $u^s_{\max}$ is the
maximum proportion of unsatisfied demand (maximum is calculated over
the demand nodes), i.e., $u^s_{\max}=\max_{i \in I}u^s_{i}/d_{i}^s$.
We next present our risk-averse two-stage stochastic programming
model, where the first-stage problem is formulated as follows:
\begin{subequations}
\label{app:first}
\begin{align}
\min \quad & \sum\limits_{j \in J}\sum\limits_{l \in
L}F_{jl}x_{jl}+\sum\limits_{j \in J}a_jR_j+\sum\limits_{s\in
S}p_sQ(\vx,\vR,\vxi(\omega_s))\label{1stage-obj}
\\
 \text{s.t.}\quad
&\cvar_{\alpha}(\vc^\top \vG(\vx,\vR,\vxi(\omega))) \leq \cvar_{\alpha}(\vc^\top \vZ), \quad \forall~\vc \in \C, \label{con:cvarapp}\\
& \sum\limits_{l \in L}x_{jl}\leq 1,\quad \forall~j \in J,\label{numfacility}\\
&R_j\leq \sum\limits_{l \in L} K_lx_{jl},\quad \forall~j \in J,\label{capfacility}\\
&x_{jl} \in \{0,1\}, \quad \forall~j \in J,~l \in L,\quad R_j \geq
0, \quad \forall~j \in J.
\end{align}
\end{subequations}
Here, $\vxi(\omega_s)=(\vd^s,\vgamma^s,\vc^s,\vnu^s,\pi^s)$ is the
vector of the random parameter realizations under scenario $s\in S$.
Given the first-stage decision vectors $\vx$ and $\vR$, the optimal
second-stage objective value under scenario $s$,
$Q(\vx,\vR,\vxi(\omega_s))$, is calculated by solving the following
second-stage problem under scenario $s$:
\begin{subequations}
\label{app:second}
\begin{align}
\min \quad&  \sum\limits_{j \in J}\sum\limits_{i\in M^s_{j}
\setminus \{j\}}c^s_{ji}y^s_{ji}+\sum\limits_{i \in I} \pi^su^s_{i}
\label{2stage-obj}
\\
 \text{s.t.}\quad&
u^s_{i} \geq d^s_{i}-\sum\limits_{j\in N_i^s}y^s_{ji},\quad
\forall~i\in
I\setminus J,\label{demandshortage1}\\
& u^s_{j} \geq d^s_{j} + \sum\limits_{i \in M_j^s \setminus
\{j\}}y^s_{ji} -(R_j\gamma^s_j+ \sum\limits_{i \in
N_j^s \setminus \{j\}}y^s_{ij}),\quad \forall~j \in J,\label{demandshortage2}\\
&\sum\limits_{i \in M_j^s \setminus \{j\}}y^s_{ji} \leq
R_j\gamma^s_j,\quad \forall~j \in
J,\label{demandshortage3}\\
&\sum\limits_{i \in N_j^s\setminus \{j\}}y^s_{ij} \leq
(1-\sum\limits_{l \in L} x_{jl})d_j^s,\quad \forall~j \in
J,\label{demandshortage4}\\
& u^s_{\max} \ge \frac{u^s_{i}}{d_{i}^s},\quad \forall~i \in I,\label{maxshortage}\\
&y^s_{ji} \geq 0 \quad \forall~j \in J,~i\in M^s_{j} \setminus
\{j\},\\
&u^s_{i}\geq 0\quad \forall~i \in I, \quad u^s_{\max} \ge 0.
\end{align}
\end{subequations}
The objective, defined by \eqref{1stage-obj} and \eqref{2stage-obj},
is to minimize the expected total cost of opening facilities,
purchasing and shipping the relief supplies, and demand shortages.
As discussed in Section \ref{sec:general-model}, constraint
\eqref{con:cvarapp} ensures that the decision-based random outcome
vector $\vG(\vx,\vR,\vxi(\omega))$ is preferable to the benchmark
outcome $\vZ$ according to the multivariate polyhedral CVaR
relation. Here $\vG(\vx,\vR,\vxi(\omega_s)) = \hat{\vg}_s(\vx, \vR,
\vy^s, \vu^s, u_{\max}^s)$ is a two-dimensional vector, the
components of which are given by
\begin{align}
  &\vg_s^1(\vx,\vR,\vy^s,\vu^s,u_{\max}^s)=u^s_{\max},\label{1-measure} \\
  &\vg_s^2(\vx,\vR,\vy^s,\vu^s,u_{\max}^s)= \dfrac{\sum\limits_{i \in
I}\operatorname{ATS}^s_i}{\sum\limits_{i \in I}\max\limits_{j\in
N^s_{i}\setminus \{i\}}  \nu_{ij}^s }, \text{where }
\operatorname{ATS}^s_i =\sum\limits_{j\in N^s_{i}\setminus \{i\}}
\dfrac{\nu_{ij}^s y^s_{ji} }{ d_{i}^s}.
\label{2-measure}
\end{align}
The maximum proportion of unsatisfied demand \eqref{1-measure} is
crucial for equity in terms of supply allocation, whereas the second
measure \eqref{2-measure}, the total delivery amount-based average
travel time score (ATS), is related to responsiveness. The
delivery-based unit average travel time to demand node $i$ could
ideally be calculated by replacing $d_{i}^s$ with the corresponding
exact delivery amount (i.e., $y_{ji}^s$) in \eqref{2-measure}; we
prefer not to use this calculation to avoid nonlinear terms.
Moreover, \eqref{2-measure} relies on a scaling with an upper bound
($\max\limits_{j\in N^s_{i}\setminus \{i\}} \nu_{ij}^s $) of
$\operatorname{ATS}_i$; this scaling ensures that both measures take
values between 0 and 1 and prevents biased solutions due to the
differences in the magnitude of the outcomes. We also note that
considering a set of scalarization vectors in \eqref{con:cvarapp} is
essential to deal with the ambiguities and inconsistencies in the
weight vectors. Eliciting the relative weights of even a single
decision maker/expert can be challenging \citep[for a related review, see,
e.g.,][]{Liu15}, and is further exacerbated if multiple experts
are involved as in the case of humanitarian logistics applications.

We next elaborate on the remaining constraints of the proposed
model. Constraints (\ref{numfacility}) ensure that at most one
facility is opened at any node $j \in J$. Constraints
(\ref{capfacility}) guarantee that the inventory levels at open
facilities do not exceed the corresponding capacity limits and there
is a facility located at node $j$ if its inventory level is
positive. Constraints
\eqref{demandshortage1}-\eqref{demandshortage4} correspond to a
restricted version of the classical network flow formulation, which
provides a more structured flow considering the coverage issues.
More specifically, these constraints enforce that the nodes without
any pre-stocked inventory receive service directly from the open
facilities that are sufficiently close (according to the upper bound
on travel times). Constraints (\ref{demandshortage3}) ensure that
the amount delivered from a facility node does not exceed its
available inventory level and outgoing flow is not possible if there
is no facility located. Constraints (\ref{demandshortage4}) assure
that there is no delivery to a node if a facility is located at that
node, and the amount of delivery is limited by its demand,
otherwise. Constraints (\ref{maxshortage}) calculate the maximum
proportion of demand shortage.

\ignore{Constraints (\ref{demandshortage1}) and
(\ref{demandshortage2}) are the flow balance inequalities and
provide a positive shortage amount for a node if its demand is not
satisfied....The rest of the constraints are for the non-negativity
and binary restrictions on the decision variables.}

We develop computationally effective solution methods for the model
\eqref{app:first}-\eqref{app:second} by applying our cut
generation-based algorithms presented in Section
\ref{sec:general-model}. Observing that the original second-stage
problem \eqref{app:second} is always feasible, it is interesting to
note that our scenario decomposition-based algorithm may need to
generate the Benders feasibility cuts \eqref{feascut}. This is the
case since the algorithm solves iteratively modified versions of
\eqref{app:second} with the constraints of type \eqref{C:p1}, which
may lead to infeasibility in a second-stage subproblem.

\section{Computational Study}\label{sec:comp-study}

In this section, we illustrate the computational effectiveness of
the two types of algorithms proposed in Section \ref{sec:general-model} by implementing them to solve the
two-stage model \eqref{app:first}-\eqref{app:second} under
varying parameter settings. We first describe how the test problem
instances are generated and then provide numerical results.

\ignore{(proposed in Sections \ref{dcg-def} and \ref{dcg-sd}) ...we
just refereed to it above}

\ignore{Cvar daha kolay gibi argumanlar yazmanin uygun olmadigini
farkettim, cut generation'da durum kesinlikle boyle ama overall
algoritma icin bunu iddia edip referans veremiyoruz...CVaR iliskisi
SSD gibi conservative degil diye model kisminda not ettim, yani biz
practice'da relaxation olarak CVaR daha anlamli, onu sectik deyip
burada da direk onerdigimiz disaster modelini cozduk deyip
gecistirmeye calistim...bu acidan dusunup modelde sunu not ettim:
``This CVaR-based relation is preferred, since it is a natural
relaxation of its SSD-counterpart which typically results in very
demanding constraints that can be excessively hard to satisfy in
practice.'' ...``we restrict our attention to the CVaR-constrained
model (\ref{GeneralForm1}) to highlight the computational
effectiveness of our solution methods.'' cumlesini silip altini
cizmedim yani}

We used the data sets of \citet{OzgunThesis16}, which are based on a
case study concerning the threat of hurricanes in the Southeastern
part of the United States \citep{Rawls2010}; in this case study, the
region is represented by a network with 30 nodes and 55 arcs, and
only a single set of hurricane scenarios with a cardinality of 51 is
considered. In order to generate instances with a larger number of
scenarios, \citet{Hong2015} propose a scenario generation method
which takes into account the dependency structures inherent in
disaster relief networks. Their approach randomly identifies the
link degradation and node damage levels depending on the proximity
to the center of the disaster, and generates the realizations of the
random input parameters accordingly. \citet{OzgunThesis16} follows
this elaborate scenario generation approach, but makes the necessary
modifications according to their assignment-based formulation of the
second-stage problem. More specifically, the author ignores 55 links
of the original network and introduces the links according to the
coverage-based responsiveness requirement. In our computational
study, we consider the most dense (fully-connected) network
structure to distribute water (in units of 1000 gallons). We refer
the reader to \cite{Hong2015} and \cite{OzgunThesis16} for further
details on the values of the relevant parameters, such as cost,
facility capacity limits, base values of the travel times and water
demand, and the random distribution information, etc. Finally, we
note that all scenarios are assumed to be equally likely.

We next discuss the additional issues that arise due to the
different features of our new risk-averse model. The stochastic
benchmarking constraints of interest additionally require us to
specify a benchmark random outcome vector and a scalarization set.
For illustrative purposes, we obtain a benchmark vector by
identifying a reasonable and feasible benchmark first-stage solution
and calculating the performance measures of interest associated with
the corresponding optimal second-stage decisions. In particular, we
consider the first-stage solution, where a single large-sized
facility with the maximum possible inventory level (5394 units) is
located at Houston, Baton Rouge, Biloxi, Atlanta and Orlando. In
real life applications, a benchmark decision
vector 
can be constructed by considering the existing practices of relief
organizations. In addition, the scalarization set is assumed to have
an ordered preference structure as follows: $\C = \{\vc \in \R_+^2:
c_1+c_2 = 1, c_2 \geq c_1\}$, where $c_1$ and $c_2$ correspond to
the relative importance of the demand satisfaction criterion
\eqref{1-measure} and responsiveness criterion \eqref{2-measure},
respectively. This particular scalarization set description can be
easily modified according the preferences of the decision makers.

\ignore{This particular selection of scalarization set asserts that
the responsiveness criterion is at least as important as the demand
satisfaction criterion; }

\vspace{0.5cm}

\noindent{\bf Computational Performance of the Solution Methods.} We
evaluate the computational performance of the proposed solution
algorithms in terms of the run times and the number of operations
performed at each iteration.

All experiments are performed on a single thread of a Windows server
with Intel(R) Xeon(R) CPU E5-2630 processor at 2.40 GHz and 32 GB of
RAM using Java and Cplex 12.6.0. CPLEX is invoked with a time limit
of 3600 seconds. MIP gap and feasibility tolerances are set to
$10^{-5}$ and $10^{-9}$, respectively; otherwise, we use the default
CPLEX settings. The results are obtained for the instances with
$\alpha = 0.99, 0.95, 0.90$ and up to 1500 scenarios. For each
setting, we report the average over three instances. We implement
the cutting plane algorithms using the \textit{lazy constraint
callback} function of CPLEX. Considering the fact that using
callbacks prevents CPLEX from utilizing dynamic search, we specify
the MIP search method for both algorithms as \textit{traditional
branch-and-cut}. Following an acceleration technique suggested in
\cite{BL16}, we initialize the RMP of the scenario
decomposition-based algorithm with optimality and feasibility cuts
associated with an initial first-stage solution to benefit from the
possible improvement obtained from the automatically created CPLEX
cuts. In particular, our initial solution locates two facilities at
Mobile and Orlando with the smallest size. Additionally, we employ
the \textit{user cut callback} function of CPLEX for adding the
Benders cuts at the root node for the fractional incumbent
solutions. The \textit{usercut callback} is invoked at each
iteration at the root node until the improvement in the relative MIP
gap is less than $10^{-6}$ for the last five consecutive iterations.

\ignore{
This
implementation allows the solver to set a better initial lower bound
at the root node.
}
\ignore{
In addition, when the scenario
decomposition-based algorithm encounters an infeasible second-stage
subproblem, it directly adds the corresponding Benders feasibility
cut the RMP. On the other hand, in case an optimal solution of the
subproblem is found, the algorithm adds the corresponding Benders
optimality cut only if it improves the current upper bound.
}

In Table \ref{tab:tab2}, we report several statistics on the
performance of the delayed cut generation algorithm for DEF
(DCG-DEF) and its scenario decomposition-based counterpart (DCG-SD).
Under the ``Time" columns,  the solution times for only the
instances solved within the time limit are reported. \ignore{ Each
replication such that the algorithm terminated with a feasible
solution due to time limit is indicated with a
{\color{blue}$^\dagger$} under the column
``Total/[\%gap]"\comment{bunlari burada soylemeye gerek var mi,
normalde bu isaretler tablo altindan aciklanir}. The replications
for which no feasible solutions can be obtained within the time
limit are marked with a {\color{red}$^*$}.} For both solution
algorithms, we report the total run time and the time spent solving
the separation problems  under columns ``Total/[\%gap]",
 and ``Sep.", respectively. If the algorithm
terminated at the time limit with a feasible solution for some
replications of the instance, then the average of the relative
optimality gaps are reported under the column ``Total/[\%gap]",
inside square brackets. The relative optimality gap reported by
CPLEX corresponds to the relative gap between the objective function
value for the best available integer solution ($z^{IP}$) and the
best LP relaxation bound at the time of termination ($z^{LP}$), i.e.
$100|z^{IP}-z^{LP}|/z^{IP}$. For both algorithms, we report the
average number of separation problems solved and the average number
of scalarization vectors added under columns ``Sep." and ``$L$'',
respectively. For DCG-SD, we report the average total number of cuts
(\eqref{cvarcut}, \eqref{feascut}, and \eqref{optcut}) added to the
master problem under column ``Cuts". We do not report the number of
cuts for DCG-DEF, because in this case, constraints of type
\eqref{DEFForm-cvar-const1} and \eqref{DEFForm-cvar-const2} are
added when a new scalarization vector is included in the RMP. Hence,
the total number of cuts added to the RMP for DCG-DEF is $(|S|+1)L$,
where $|S|$ is the number of scenarios and $L$ is the number of
scalarization vectors generated.

The results  show that DCG-SD outperforms DCG-DEF in terms of the
computation times and number of instances that can be solved to
optimality. Out of 63 instances, DCG-DEF is able to find an optimal
solution for only 35 instances and it terminates with a feasible
solution after one hour for seven instances. On the other hand,
DCG-SD provides at least a feasible solution for all instances even
if it fails to prove optimality for five of them. Considering the
instances that can be solved to optimality by both methods, it can
be seen that DCG-SD proves optimality in a shorter amount of time
than DCG-DEF.

We observe that unlike the single-stage multivariate
CVaR-constrained problems in \cite{Noyan13}, the separation problem
is no longer the bottleneck taking over 95\% of the solution time.
In fact, for DCG-DEF, the  time spent on separation is negligible
when compared to the overall solution time, with a small number of
separation problems solved. On the other hand, a large  number of
separation problems are solved in DCG-SD, hence it is important to
use the strongest formulations available for the separation problem
(i.e., those provided in \citealp{Liu15} for the equiprobable scenario
case) to reduce the time spent on generating the scalarization
vectors. Nevertheless, a few instances cannot be solved with DCG-SD
within the time limit mainly due to the long solution times of the
separation MIPs. However, for these instances, our algorithm is able
to find a feasible solution during its course, and hence it provides
optimality gaps at termination. To avoid long solution times, one
can implement heuristic separation methods. In addition, if
$\varepsilon$-feasible solutions are satisfactory, then the
violation thresholds can be updated accordingly.


\begin{table}[ht]
  \small  \centering
  \caption{Performance of the proposed delayed
cut generation  algorithms ($|I|=|J|=30$)}
      \begin{tabular}{cc|cc|cc|cc|ccc}
    \toprule
          &       & \multicolumn{4}{|c|}{DCG-DEF}                & \multicolumn{5}{c}{DCG-SD} \\
   \hhline{|~~|-|-|-|-|-|-|-|-|-|}
          &       & \multicolumn{2}{|c|}{Time (s)} & \multicolumn{2}{c|}{Number}  & \multicolumn{2}{c|}{Time (s)}  & \multicolumn{3}{c}{Number} \\
          \hhline{|~~|-|-|-|-|-|-|-|-|-|}
    $\alpha$ & \# Sce. & Total / [\%gap] & Sep.  & Sep. & $L $       & Total / [\%gap]  & Sep.        & Cuts   & Sep.  & $L$ \\ \hhline{|-|-|-|-|-|-|-|-|-|-|-|} 
          & 200   &   {704.9}  &   {0.2}  &   {4.3} &   {2.3}  &   {46.8} &    {3.1} &    {6101.3}  &   {62.0} &   {3.0} \\
          & 300   &   {1483.5}  &   {0.7}   &   {5.3} &   {3.0}   &   {108.7} &    {5.1}   &   {9697.3} &   {78.0} &   {3.3} \\
          & 400   &   {2622.4}   &   {1.2}  &   {5.3} &   {3.7}   &   {351.5} &    {13.9}   &   {18569.7}  &   {122.0} &   {5.3} \\
  0.99         & 500   &   {3071.3}  &   {0.8}   &   {4.0} &   {3.0}   &   {363.3}  &   {17.7}   &   {20805.0} &    {105.3} &   {4.3} \\
          & 600   &   {${\color{blue} [0.2]^\dagger}{\color{red}^{**}}$}  &   {-}  &   {-} &   {-}   &   {662.1}  &   {42.4}   &   {27068.7} &   {157.0} &   {5.0} \\
          & 800   &   {${\color{red}^{***}}$} &   {-} &   {-} &   {-}   &   {987.2} &    {51.7}   &   {31994.7}  &   {129.7} &   {4.7} \\
          & 1000  &   {${\color{red}^{***}}$} &   {-}   &   {-} &   {-}   &   {1594.2} &    {110.3}   &   {43934.0} &   {129.3} &   {5.3} \\
          & 1500  &   {${\color{red}^{***}}$} &   {-}    &   {-} &   {-}   &   {3450.6 ${\color{blue}[0.4]^{\dagger \dagger}}$} &   {506.0}   &   {62436.0}&   {172.0} &   {4.0} \\\hline
          & 200   &   {408.9} &   {0.7}   &   {2.7} &   {0.7}   &   {148.6} &    {17.4}   &   {9524.7} &    {137.0} &   {4.0} \\
          & 300   &   {1476.4}  &   {1.6}   &   {3.3} &   {1.0}   &   {277.6} &    {26.1}   &   {13280.0} &   {128.7} &   {4.0} \\
          & 400   &   {3392.7} &   {2.1}   &   {3.7} &   {1.7}   &   {536.8} &    {45.2}   &   {20633.3} &   {162.0} &   {4.0} \\
   0.95         & 500   &   {2753.2 ${\color{blue}[0.2]^{\dagger}}$} &   {1.4}   &   {2.5} &   {1.5}   &   {671.9} &   {150.6}   &   {23045.0}  &   {161.3} &   {3.3} \\
          & 600   &   {${\color{blue}[0.1]^{\dagger \dagger}}{\color{red}^*}$} &   {-}   &   {-} &   {-}   &   {878.0} &   {156.2}   &   {26151.3} &   {141.0} &   {3.3} \\
          & 800   &   {${\color{red}^{***}}$} &   {-}  &   {-} &   {-}   &   {1616.4} &    {126.4}   &   {33607.7} &   {115.7} &   {3.0} \\
          & 1000  &   {${\color{red}^{***}}$} &   {-}    &   {-} &   {-}   &   {1857.7} &    {274.6}   &   {42229.3}  &   {140.3} &   {3.3} \\
          & 1500  &   {${\color{red}^{***}}$} &   {-}    &   {-} &   {-}   &   {3390.1 ${\color{blue}[1.4]^{\dagger \dagger}}$}  &   {867.2}   &   {52294.0}  &   {120.0} &   {2.0} \\ \hline
          & 200   &   {397.4}  &   {1.0}   &   {2.7} &   {0.7}   &   {303.5} &    {36.5}   &   {11781.0} &   {180.0} &   {4.3} \\
   0.90        & 300   &   {1128.5} &   {0.6}   &   {3.0} &   {0.7}   &   {352.5} &   {48.7}   &   {13152.7} &   {111.3} &   {3.0} \\
          & 400   &   {2263.0}  &   {1.6}   &   {4.0} &   {1.3}   &   {623.9} &    {89.3}   &   {19482.3}  &   {130.3} &   {3.3} \\
          & 500   &   {1660.2 ${\color{blue}[0.3]^\dagger}$} &   {2.8}   &   {2.0} &   {1.0}   &   {1392.1} &   {228.6}   &   {25358.0}  &   {150.3} &   {3.3} \\
          & 600   &   {2264.3 ${\color{blue}[0.1]^{\dagger \dagger}}$}  &   {2.2}   &   {2.0} &   {1.0}   &   {857.5 ${\color{blue}[7.9]^\dagger}$} &   {229.6}   &   {30051.0}  &   {153.5} &   {3.0} \\
    \bottomrule
    \multicolumn{1}{r}{{\color{blue} $\dagger$}:}& \multicolumn{10}{l}{Each dagger sign indicates one instance hitting the time limit with an integer feasible solution.}\\
     \multicolumn{1}{r}{{\color{red} $*$}:}& \multicolumn{10}{l}{Each asterisk sign indicates one instance hitting the time limit with no integer feasible solution.}
    \end{tabular}%
\label{tab:tab2}
\end{table}%

We close this section by noting that the problem sizes we consider
are significantly larger (in terms of first- and second-stage
variables and constraints and/or the number of scenarios) when
compared to the data sets in \cite{Noyan13} and \cite{DW15} for
related optimization problems with multivariate stochastic
benchmarking constraints. In addition, our first-stage problem
involves discrete decisions, which results in the most difficult
data sets considered for this problem class to date. In conclusion,
our computational experiments showcase the scalability of the
proposed scenario decomposition-based method with respect to the
number of scenarios.

\ignore{the scalability of the proposed method..iki algoritma var
deyip durunca bu tekil garip oluyor sanki, asil scale eden de SD
olan...}

\ignore{

\section{Conclusions}\label{sec:conclusion}

We consider two-stage optimization problems under multivariate
stochastic benchmarking preference constraints. For finite
probability spaces, we give an exact algorithmic framework that
obtains optimal solutions to the problems of interest in finitely
many iterations. We highlight the case where the multivariate
stochastic preference relation is based on the CVaR measure, but
show that our framework is also applicable to the SSD-based
counterpart. A desirable feature of our algorithm is that it
exploits the decomposable structure of the second-stage problems and
the stochastic preference constraints. We demonstrate the
performance of our algorithm on a humanitarian relief network design
problem. Our computational experiments showcase the scalability of
the proposed method with respect to the number of scenarios.

}

\section*{Acknowledgements}
Simge K\"{u}\c{c}\"{u}kyavuz and Merve Merakl{\i} are supported, in
part, by National Science Foundation Grant 1537317.  Nilay Noyan
acknowledges the support from The Scientific and Technological
Research Council of Turkey under grant \#115M560.

\appendix
\section*{Appendix A. Duality Results} \label{sec:app}
In this section, we develop duality formulations and optimality
conditions for the important special case of the multivariate
CVaR-constrained two-stage model
\eqref{GeneralForm1}-\eqref{second-stage-problem}, where the mapping
$f$ is linear, the set $\X $ is a polyhedral set, and the
first-stage decisions are continuous. Let $\X=\{\vx \in
\R_+^{n_1}~:~A\vx \leq \vb \}$ and $f(\vx)= \vf^\top\vx$ for some
matrix $A \in \R^{m_1 \times n_1}$, and the vectors $\vf\in
\R^{n_1}$ and $\vb\in \R^{m_1}$. Then, the model of interest becomes
\begin{subequations}\label{general_linear_primal}
\begin{align}
(\operatorname{LinearP})\quad \min~~&\vf^\top\vx+\sum_{s \in S} p_s \vq^\top_s  \vy_s  \\
\text{s.t.}~~
&A \vx \leq \vb,\quad \vx\in \R_+^{n_1},\label{const:X}\\
&\eqref{c:cvar}, ~\eqref{eq:def-couple}-\eqref{eq:def-couple2}.
\end{align}\end{subequations}
For a finite set
$\tilde{\C}=\{\tilde{\vc}_{(1)},\ldots,\tilde{\vc}_{( L)}\}$  we
consider the following LP, referred to as
$(\operatorname{FiniteP}(\tilde{\C}))$:$$\min \{\vf^\top\vx+\sum_{s
\in S} p_s \vq^\top_s
\vy_s~:~\eqref{DEFForm-cvar-const1}-\eqref{DEFForm-cvar-const3}
 \text{
(with } \bar{L} \text{ replaced by } L),\eqref{const:X},~\veta
\in \R^{L},~
\eqref{eq:def-couple}-\eqref{eq:def-couple2}\}.$$The next lemma
shows that $(\operatorname{FiniteP}(\tilde{\C}))$ is equivalent to
$(\operatorname{LinearP})$ for suitable choices of $\tilde{\C}$; it
is a simple consequence of Proposition \ref{pro:DEFform}.
\begin{lemma}\label{prop_linear}  Let $\hat{\C}= \{\hat
\vc_{(1)},\ldots,\hat{\vc}_{(L)}\}$ as defined in Proposition
\ref{pro:DEFform}, and assume that the finite set $\tilde{\C}$
satisfies $\hat{\C}\subset\tilde{\C}\subset \C$. Then a vector
$(\vx,\vy)$ is a feasible (optimal) solution of
$(\operatorname{LinearP})$ if and only if
$(\vx,\vy,\veta^{(\vx,\vy)},\vw^{(\vx,\vy)})$ is a feasible
(optimal) solution of $(\operatorname{FiniteP}(\tilde{\C}))$, where
$\eta^{(\vx,\vy)}_{\ell}=\var_{\alpha}(\tilde{\vc}_{(\ell)}^\top\hat{\vG}(\vx,\vy))$
and
$w^{(\vx,\vy)}_{i\ell}=[\eta^{(\vx,\vy)}_{\ell}-\tilde{\vc}_{(\ell)}^\top\hat{\vg}_s(\vx,\vy_s)]_+$.
\end{lemma}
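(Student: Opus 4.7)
The plan is to deduce the lemma from Proposition~\ref{pro:DEFform} combined with the classical LP representation of CVaR. Proposition~\ref{pro:DEFform} already establishes the equivalence between $(\operatorname{LinearP})$ and $(\operatorname{FiniteP}(\hat\C))$, so what remains is (i) to handle the enlargement from $\hat\C$ to $\tilde\C$, and (ii) to verify that the particular $(\veta^{(\vx,\vy)},\vw^{(\vx,\vy)})$ constructed from the VaR/positive-part formulas realizes feasibility in $(\operatorname{FiniteP}(\tilde\C))$. Since $\hat\C\subseteq\tilde\C$, any solution feasible to $(\operatorname{FiniteP}(\tilde\C))$ (after restricting the $\eta_\ell,w_{s\ell}$ to the indices in $\hat\C$) is feasible to $(\operatorname{FiniteP}(\hat\C))$, so Proposition~\ref{pro:DEFform} gives the ``only if'' direction after the auxiliary-variable projection.

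For the forward (``if'') direction, I would take a feasible $(\vx,\vy)$ for $(\operatorname{LinearP})$ and verify that, for each $\tilde\vc_{(\ell)}\in\tilde\C\subseteq\C$, the values $\eta^{(\vx,\vy)}_\ell=\var_\alpha(\tilde\vc_{(\ell)}^\top\hat\vG(\vx,\vy))$ and $w^{(\vx,\vy)}_{s\ell}=[\tilde\vc_{(\ell)}^\top\hat\vg_s(\vx,\vy_s)-\eta^{(\vx,\vy)}_\ell]_+$ satisfy constraints \eqref{DEFForm-cvar-const1}--\eqref{DEFForm-cvar-const3}. Constraints \eqref{DEFForm-cvar-const2} and \eqref{DEFForm-cvar-const3} are immediate from the positive-part definition. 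For \eqref{DEFForm-cvar-const1}, the Rockafellar--Uryasev identity applied to the discrete random variable $\tilde\vc_{(\ell)}^\top\hat\vG(\vx,\vy)$ implies that this particular $(\eta,\vw)$ attains the minimum in the LP representation of CVaR, so
\begin{equation*}
\eta^{(\vx,\vy)}_\ell+\tfrac{1}{1-\alpha}\sum_{s\in S}p_s w^{(\vx,\vy)}_{s\ell}=\cvar_\alpha(\tilde\vc_{(\ell)}^\top\hat\vG(\vx,\vy))\leq\cvar_\alpha(\tilde\vc_{(\ell)}^\top\vZ),
\end{equation*}
where the inequality follows from constraint \eqref{c:cvar} of $(\operatorname{LinearP})$ applied at $\vc=\tilde\vc_{(\ell)}\in\C$. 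This yields \eqref{DEFForm-cvar-const1}. The remaining constraints \eqref{const:X} and \eqref{eq:def-couple}--\eqref{eq:def-couple2} are untouched.

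For the reverse direction, if $(\vx,\vy,\veta,\vw)$ is feasible for $(\operatorname{FiniteP}(\tilde\C))$ then, in particular, the $\hat\C$-indexed entries satisfy \eqref{DEFForm-cvar-const1}--\eqref{DEFForm-cvar-const3}, so by Proposition~\ref{pro:DEFform} the pair $(\vx,\vy)$ is feasible for $(\operatorname{LinearP})$. Optimality then follows automatically: the objectives of $(\operatorname{LinearP})$ and $(\operatorname{FiniteP}(\tilde\C))$ coincide on $(\vx,\vy)$ since they do not involve $\veta$ or $\vw$, so the bijection between feasible $(\vx,\vy)$ and the augmented tuples $(\vx,\vy,\veta^{(\vx,\vy)},\vw^{(\vx,\vy)})$ preserves objective values. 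The only technical point requiring care is the explicit Rockafellar--Uryasev argument showing that the chosen VaR/positive-part values attain (not just upper bound) $\cvar_\alpha$; this is the main step and is a standard calculation, but must be carried out in the finite-probability setting used here.
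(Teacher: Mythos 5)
Your proposal is correct and follows exactly the route the paper intends: the paper gives no written proof, stating only that the lemma ``is a simple consequence of Proposition~\ref{pro:DEFform},'' and your argument (restrict to $\hat{\C}$ for one direction, use the Rockafellar--Uryasev attainment of $\cvar_\alpha$ at $\eta=\var_\alpha$ together with constraint \eqref{c:cvar} for the other, and note the objectives ignore $\veta,\vw$) is precisely the omitted elaboration. You also, correctly, use $w^{(\vx,\vy)}_{s\ell}=[\tilde{\vc}_{(\ell)}^\top\hat{\vg}_s(\vx,\vy_s)-\eta^{(\vx,\vy)}_{\ell}]_+$, silently fixing what is evidently a sign (and index) typo in the lemma statement, since only that choice satisfies \eqref{DEFForm-cvar-const2}--\eqref{DEFForm-cvar-const3}.
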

The LP formulation $(\operatorname{FiniteP}(\tilde{\C}))$ does not
immediately offer a tractable solution approach, however it leads to
a natural delayed cut generation algorithm as discussed in Section
\ref{sec:general-model}. Moreover,
$(\operatorname{FiniteP}(\tilde{\C}))$ provides an important direct
way to derive strong duality results and optimality conditions,
presented below.
First, recall the following notation:
$\hat{\vg}_s(\vx,\vy_s)=(\vg_s^1(\vx,\vy_s),\vg_s^2(\vx,\vy_s),\ldots,\vg_s^d(\vx,\vy_s))^\top,~s\in
S,$ and $\vg_s^i(\vx,\vy_s) = \vbg_s^i \vx + \vtg_s^i \vy_s$,~$i\in
\{1,\ldots,d\},~s\in S$. By abuse of notation, we introduce the
random matrices $\bar{\vg}~:~\Omega \rightarrow\R^{d\times n_1}$ and
$\tilde{\vg}~:~ \Omega \rightarrow \R^{d\times n_2}$, where
$\bar{\vg}^\top(\omega_s)=\bar {\vg}_s^\top$ and
$\tilde{\vg}^\top(\omega_s)=\tilde{\vg}_s^\top$ with the $i$th
columns  defined by $\vbg_s^i$ and $\vtg_s^i$, respectively.
Denoting the set of all finitely supported finite non-negative
measures on the scalarization polyhedron $\C$ by $\M_+^F(\C)$, we
obtain the following dual problem to $(\operatorname{LinearP})$:
\begin{subequations}\label{general_linear_dual}
\begin{align}
(\operatorname{LinearD})\quad \max~~&
-\int_\C \cvar_{\alpha}(\vc^\top\vZ)\,\mu(\dd\vc)-\vlambda^\top\vb +\E[\vpi^\top \vh(\omega)]\\
\text{s.t.}~~& \E(\nu)=\mu,\label{dualfeascond1}\\
&\nu(\omega_s) \leq \frac{\mu}{1-\alpha},\quad ~\forall~s\in S,\label{dualfeascond2}\\
&\E[\vpi^\top T(\omega)]-\E\left(\int_\C \vc^\top\bar{\vg}\,\nu(\dd\vc)\right)\leq \vf^\top+\vlambda^\top A,\label{dualfeascond3}\\
&\vpi^\top_sW_s-\int_\C \vc^\top\tilde{\vg}_s[\nu(\omega_s)](\dd\vc)\leq
\vq_s^\top,\quad ~\forall~s\in S,\label{dualfeascond4}\\
&\vlambda\in\R_+^{m_1},\quad
\mu\in\M_+^F(\C),\quad\nu:\Omega\rightarrow\M_+^F(\C),\quad \vpi_s \in
\R_+^{m_2},~s\in S.
\end{align}\end{subequations}
The proof of the next duality theorem, which is similar to that of
Theorem 3 in \citet{Noyan13}, is omitted here for the sake of
brevity. It mainly relies on Lemma \ref{prop_linear} and the
following facts for a finitely supported measure $\mu\in\M_+^F(\C)$:
$\supp(\mu)=\left\{\vc\in \C~:~\mu\left(\{\vc\}\right)>0\right\}$ and
$\int_\C  u(\vc)\,\mu(\dd\vc)=\sum_{\vc\in\supp(\mu)}
u(\vc)\mu\left(\{\vc\}\right)$ for a function $u:\C\rightarrow\R$.
\begin{theorem}\label{thm:strong-duality}The problem $(\LP)$ has a
finite optimum value if and only if $(\LD)$ does, in which case the
two optimum values coincide. In addition, a feasible solution
$(\vx,\vy)$ of $(\LP)$ and a feasible solution
$(\vlambda,\mu,\nu,\vpi)$ of $(\LD)$ are both optimal for their
respective problems if and only if the following complementary
slackness conditions hold: \small{
$$\begin{array}{llr}
(i)& \supp(\mu) \subset\quad \left\{\vc~:~
\cvar_{\alpha}(\vc^\top \hat{\vG}(\vx,\vy))=\cvar_{\alpha}(\vc^\top\vZ)\right\},&\\
(ii)& \supp(\nu(\omega_s))\subset\quad \left\{\vc~:~
\var_{\alpha}(\vc^\top\hat{\vG}(\vx,\vy))\leq\vc^\top\hat{\vg}_s(\vx,\vy_s)\right\},&\quad s\in S,\\
(iii) & \supp(\frac{\mu}{1-\alpha}-\nu(\omega_s))\subset\quad
\left\{\vc~:~
\var_{\alpha}(\vc^\top\hat{\vG}(\vx,\vy))\geq\vc^\top\hat{\vg}_s(\vx,\vy_s)\right\},&\quad s\in S,\\
(iv) & \vlambda^\top(\vb-A\vx)=0,&\\
(v)&\vpi^\top_s(T_s\vx+W_s\vy_s - \vh_s)=0,&\quad s\in S,\\
(vi)&(\vf^\top+\vlambda^\top A- \E[\vpi^\top T(\omega)]+\E\left(\int_\C \vc^\top\bar{\vg}\,\nu(\dd\vc)\right))\vx=0,&\\
(vii)&(\vq_s^\top-\vpi^\top_sW_s+\int_\C
\vc^\top\tilde{\vg}_s[\nu(\omega_s)](\dd\vc))\vy_s=0,&\quad s\in S.
\end{array}$$}
\end{theorem}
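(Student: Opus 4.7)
My plan is to reduce the theorem to a standard LP duality statement via Lemma \ref{prop_linear}, and then translate the resulting LP complementary slackness conditions into the measure-theoretic language of $(\LD)$. Fix any finite $\tilde{\C}=\{\hat\vc_{(1)},\ldots,\hat\vc_{(L)}\}$ with $\hat{\C}\subset\tilde{\C}\subset\C$. By Lemma \ref{prop_linear}, $(\LP)$ is equivalent to the finite LP $(\operatorname{FiniteP}(\tilde{\C}))$ with the same primal optimal cost, so strong LP duality is available. Computing the dual of $(\operatorname{FiniteP}(\tilde{\C}))$ with multipliers $\vlambda\ge 0$ for $A\vx\le\vb$, $\mu_l\ge 0$ for the CVaR-budget rows \eqref{DEFForm-cvar-const1}, $\tilde\nu_{sl}\ge 0$ for \eqref{DEFForm-cvar-const2}, and $\tilde\vpi_s\ge 0$ for the coupling $T_s\vx+W_s\vy_s\ge\vh_s$, and then setting $\vpi_s=\tilde\vpi_s/p_s$ and $\nu(\omega_s)=\sum_l(\tilde\nu_{sl}/p_s)\,\delta_{\hat\vc_{(l)}}$ and $\mu=\sum_l\mu_l\,\delta_{\hat\vc_{(l)}}$, one sees that the resulting LP dual is precisely $(\LD)$ restricted to measures $\mu,\nu(\omega_s)\in\M_+^F(\tilde{\C})$. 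In particular, the dual of the $\eta_l$-variable yields $\E(\nu)=\mu$, the dual of $w_{sl}\ge 0$ yields $\nu(\omega_s)\le\mu/(1-\alpha)$, and the duals of the $\vx,\vy_s$-variables yield \eqref{dualfeascond3}--\eqref{dualfeascond4}; the dual objective matches that of $(\LD)$.

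Next I would establish weak duality for the infinite-dimensional pair directly, to connect the LP-based analysis back to measures on all of $\C$. For any primal-feasible $(\vx,\vy)$ and dual-feasible $(\vlambda,\mu,\nu,\vpi)$, multiplying \eqref{dualfeascond3}--\eqref{dualfeascond4} by $\vx\ge 0$ and $\vy_s\ge 0$ respectively and summing, then using $A\vx\le\vb$, $T_s\vx+W_s\vy_s\ge\vh_s$, gives $\vf^\top\vx+\sum_s p_s\vq_s^\top\vy_s\ge -\vlambda^\top\vb+\E[\vpi^\top\vh(\omega)]-\E\bigl(\int_\C \vc^\top\hat{\vg}\,\nu(\dd\vc)\bigr)$. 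The last integral is bounded by $\int_\C\cvar_\alpha(\vc^\top\hat{\vG}(\vx,\vy))\mu(\dd\vc)$ because the constraints $\E(\nu)=\mu$ and $0\le\nu(\omega_s)\le\mu/(1-\alpha)$ are exactly the dual-feasibility constraints in the LP representation $\cvar_\alpha(V)=\max\{\E[V\xi]:\xi\in[0,1/(1-\alpha)],\,\E(\xi)=1\}$ applied pointwise on $\supp(\mu)$; primal feasibility \eqref{c:cvar} then upgrades this to $\int_\C\cvar_\alpha(\vc^\top\vZ)\mu(\dd\vc)$, yielding the weak duality bound. For the biconditional on finite optima: if $(\LP)$ is finite, applying LP strong duality to $(\operatorname{FiniteP}(\hat{\C}))$ produces a finitely supported $(\LD)$-feasible solution matching the primal value, and weak duality shows this is optimal. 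Conversely, if $(\LD)$ has a finite optimum attained (or approached) by some finitely supported $(\mu,\nu)$, set $\tilde{\C}=\supp(\mu)\cup\hat{\C}$; the LP dual of $(\operatorname{FiniteP}(\tilde{\C}))$ is then feasible, so the LP primal is either feasible with finite value or unbounded, and the latter is ruled out since $(\LP)\ge(\LD)$ is bounded below. In all cases the two optima coincide.

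Finally, for the complementary slackness conditions, I would pick any primal-dual optimal pair $(\vx,\vy)$ and $(\vlambda,\mu,\nu,\vpi)$ where $\mu,\nu$ are finitely supported (guaranteed by the construction above), take $\tilde{\C}$ to contain all support points plus $\hat{\C}$, and lift the pair to optimal primal-dual solutions of $(\operatorname{FiniteP}(\tilde{\C}))$ by setting $\eta_l=\var_\alpha(\hat\vc_{(l)}^\top\hat{\vG}(\vx,\vy))$ and $w_{sl}=[\hat\vc_{(l)}^\top\hat{\vg}_s(\vx,\vy_s)-\eta_l]_+$ as in Lemma \ref{prop_linear}; the Rockafellar--Uryasev identity ensures these values achieve the CVaR. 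Standard LP complementary slackness at this pair yields: $\mu_l>0\Rightarrow$ tight CVaR budget, giving (i); $\tilde\nu_{sl}>0\Rightarrow w_{sl}=\hat\vc_{(l)}^\top\hat{\vg}_s-\eta_l$ and $w_{sl}\ge 0$ giving $\hat\vc_{(l)}^\top\hat{\vg}_s\ge\var_\alpha$, i.e.\ (ii); $w_{sl}>0$ forces the dual constraint $\mu_l p_s/(1-\alpha)=\tilde\nu_{sl}$ tight, so $\mu_l/(1-\alpha)-\nu_{sl}>0\Rightarrow w_{sl}=0\Rightarrow \hat\vc_{(l)}^\top\hat{\vg}_s\le\eta_l=\var_\alpha$, giving (iii); and (iv)--(vii) are the routine CS relations for the $A$, $T_s,W_s$, $\vx$, and $\vy_s$ rows/columns. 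Conversely, if a feasible pair satisfies (i)--(vii), a direct calculation (multiplying the defining inequalities and exploiting (i)--(iii) to replace inner products against $\nu(\omega_s)$ by CVaR values against $\mu$) equalizes the two objectives, proving joint optimality.

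The main obstacle is the translation in (ii)--(iii): the LP bookkeeping produces conditions involving $\eta_l$ and $w_{sl}$, and one must invoke the Rockafellar--Uryasev characterization to identify the optimizing $\eta_l$ with $\var_\alpha(\hat\vc_{(l)}^\top\hat{\vG}(\vx,\vy))$. A secondary subtlety is ensuring the reverse direction of the finite-optimum equivalence handles the case where $(\LP)$ is infeasible; this is resolved by invoking the LP alternative (infeasible primal forces dual infeasible or unbounded) applied to the lifted LP over any $\tilde{\C}$ containing $\supp(\mu)\cup\hat{\C}$.
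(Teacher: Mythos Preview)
Your proposal is correct and follows exactly the route the paper indicates: the paper actually omits the proof, merely noting that it proceeds as in Theorem~3 of \citet{Noyan13} by using Lemma~\ref{prop_linear} to reduce $(\LP)$ to the finite LP $(\operatorname{FiniteP}(\tilde{\C}))$ and then identifying integrals against finitely supported measures with finite sums. Your explicit LP-dual computation, direct weak-duality argument via the dual representation of $\cvar$, and translation of LP complementary slackness into conditions (i)--(vii) through the choice $\eta_l=\var_\alpha(\hat\vc_{(l)}^\top\hat{\vG}(\vx,\vy))$ fill in precisely the details this sketch calls for.
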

We remark that our dual formulation is analogous to Haar's dual for
semi-infinite linear programs \citep[see, e.g.,][]{Bonnans00}. The finite
representation of the multivariate CVaR relation (appearing in
Proposition \ref{pro:DEFform}) proves to be useful to obtain strong
duality results and optimality conditions directly from linear
programming duality without the need for constraint qualifications.

\ignore{For the proof, see the proof of Theorem 3 of
\citet{Noyan13}; similar approach should work}


\vspace{0.5cm}

\noindent{\bf Lagrangian Duality.}
The duality results established in Theorem \ref{thm:strong-duality}
have a natural Lagrangian interpretation. In fact, measures on $\C$
are a natural choice to use as Lagrange multipliers, since the CVaR
constraints in \eqref{c:cvar} are indexed by the scalarization set
$\C$. Accordingly, we define the Lagrangian function
$\cL:(\X,\Y)\times\M_+^F(\C)\rightarrow\R$ as follows:
\begin{equation}\label{lagfunc}\cL(\vx,\vy,\mu)=\vf^\top\vx+\sum_{s \in S} p_s \vq^\top_s  \vy_s
+\int_\C \cvar_\alpha(\vc^\top \hat{\vG}(\vx,\vy))\,\mu(\dd
\vc)-\int_\C \cvar_\alpha(\vc^\top \vZ))\,\mu(\dd
\vc).\end{equation} Here, by abuse of notation, $\vy \in
\R_+^{|S|n_2}$ represents the collection of decision vectors $\vy_s
\in \R_+^{n_2},~s\in S$, and $\vy \in \Y$ refers to $\vy_s \in
\Y(\vx,\vxi(\omega_s)),~s\in S$. For the ease of exposition, we
assume that the feasible sets $\X$ and $\Y$ are compact; this is a
very mild assumption due to the finiteness of the first-stage and
second-stage objective function values. Then,
$(\operatorname{LinearP})$ is equivalent to
$\min\limits_{(\vx,\vy)\in
(\X,\Y)}\max\limits_{\mu\in\M_+^F(\C)}\cL(\vx,\vy,\mu)$, while the
corresponding Lagrangian dual problem is given by
\begin{equation}\label{Ldual-rho}(\operatorname{LagrangianD})\max\limits_{\mu\in\M_+^F(\C)}\min\limits_{(\vx,\vy)\in
(\X,\Y)}\cL(\vx,\vy,\mu).\end{equation}For any feasible solution of
$(\LP)$ and any $\mu\in\M_+^F(\C)$, $\cL(\vx,\vy,\mu)\leq
\vf^\top\vx+\sum_{s \in S} p_s \vq^\top_s \vy_s$. The weak duality
immediately follows: $\OBF_P \geq \OBF_D$, where $\OBF_P$ and
$\OBF_D$, respectively, denote the optimum objective values of
$(\LP)$ and $(\LD)$. The next theorem, which is a consequence of
Theorem \ref{thm:strong-duality}, provides the strong duality result
and optimality conditions.
\begin{theorem}If the primal problem $(\operatorname{LinearP})$ has an optimal solution,
then the dual problem $(\operatorname{LinearD})$ also has an optimal
solution, and the optimal objective values coincide. A primal
feasible solution $(\vx^*,\vy^*)$ and a dual feasible solution
$\mu^*$ are simultaneously optimal if and only if they satisfy the
equations
\begin{align}\int_\C \cvar_\alpha(\vc^\top\hat{\vG}(\vx^*,\vy^*))\,\mu^*(\dd\vc)&=\int_\C \cvar_\alpha(\vc^\top
\vZ)\,\mu^*(\dd\vc),\label{lag-cond1}\\\cL(\vx^*,\vy^*,\mu^*)&=\min\limits_{(\vx,\vy)\in
(\X,\Y)}\cL(\vx,\vy,\mu^*).\label{lag-cond2}\end{align}
\end{theorem}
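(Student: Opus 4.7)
The plan is to obtain this theorem as a direct corollary of the LP-based strong duality in Theorem~\ref{thm:strong-duality}, by showing that the Lagrangian dual $(\operatorname{LagrangianD})$ has the same optimal value as $(\LD)$. For a fixed $\mu\in\M_+^F(\C)$, I would first evaluate the inner minimum $\min_{(\vx,\vy)\in(\X,\Y)}\cL(\vx,\vy,\mu)$ by substituting, for each $\vc\in\supp(\mu)$, the LP representation of $\cvar_\alpha(\vc^\top\hat{\vG}(\vx,\vy))$ given in \eqref{eq:cvarg}; this produces an LP in variables $(\vx,\vy,\veta,\vw)$ whose feasible region augments $\X\times\Y$ with the cut inequalities $w_{s\vc}\ge\vc^\top\hat{\vg}_s(\vx,\vy_s)-\eta_\vc$ for $s\in S,\vc\in\supp(\mu)$. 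Dualizing this LP (with $\vlambda$ on $A\vx\le\vb$, $\vpi_s$ on $T_s\vx+W_s\vy_s\ge\vh_s$, and a measure-valued multiplier $\nu:\Omega\to\M_+^F(\C)$ on the cut inequalities) recovers precisely constraints \eqref{dualfeascond2}--\eqref{dualfeascond4}, while the stationarity conditions with respect to the $\eta_\vc$ variables yield the marginal condition $\E(\nu)=\mu$ from \eqref{dualfeascond1} and the bound $\nu(\omega_s)\le\mu/(1-\alpha)$.

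Combining this with the outer maximization over $\mu$, the value of $(\operatorname{LagrangianD})$ equals the supremum of the $(\LD)$ objective over all feasible $(\vlambda,\mu,\nu,\vpi)$. Theorem~\ref{thm:strong-duality} then guarantees that whenever $(\LP)$ has an optimal solution, $(\LD)$ admits an optimum $(\vlambda^*,\mu^*,\nu^*,\vpi^*)$ whose objective matches $\OBF_P$; the component $\mu^*$ attains the maximum in $(\operatorname{LagrangianD})$, establishing both existence of a Lagrangian-dual optimum and $\OBF_P=\OBF_D$.

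For the characterization via \eqref{lag-cond1}--\eqref{lag-cond2}, I would prove necessity and sufficiency separately. Assume $(\vx^*,\vy^*)$ is primal-optimal and $\mu^*$ is Lagrangian-dual-optimal. Primal feasibility together with $\mu^*\in\M_+^F(\C)$ yields $\int_\C\cvar_\alpha(\vc^\top\hat{\vG}(\vx^*,\vy^*))\mu^*(\dd\vc)\le\int_\C\cvar_\alpha(\vc^\top\vZ)\mu^*(\dd\vc)$; strong duality forces equality, i.e.\ \eqref{lag-cond1}, because otherwise $\cL(\vx^*,\vy^*,\mu^*)<\vf^\top\vx^*+\sum_s p_s\vq_s^\top\vy_s^*=\OBF_P=\OBF_D$, contradicting weak duality applied at $\mu^*$. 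Equation \eqref{lag-cond2} is the saddle-point condition: the chain $\min_{(\vx,\vy)\in(\X,\Y)}\cL(\vx,\vy,\mu^*)\le\cL(\vx^*,\vy^*,\mu^*)=\OBF_P=\OBF_D=\min_{(\vx,\vy)\in(\X,\Y)}\cL(\vx,\vy,\mu^*)$ forces equality throughout. Conversely, given a primal-feasible $(\vx^*,\vy^*)$ and dual-feasible $\mu^*$ satisfying both identities, \eqref{lag-cond1} collapses $\cL(\vx^*,\vy^*,\mu^*)$ to $\vf^\top\vx^*+\sum_s p_s\vq_s^\top\vy_s^*$, while \eqref{lag-cond2} sandwiches this value between $\OBF_D$ and $\OBF_P$ via weak duality, forcing simultaneous optimality.

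The main obstacle is the first step: making rigorous the LP duality applied to $\min\cL(\vx,\vy,\mu)$ when the Lagrangian contains an integral of CVaR terms rather than a finite sum. Since $\mu$ is finitely supported, the integral reduces to a finite sum and the primal LP is well defined, but one must carefully verify that the optimal dual measure $\nu$ remains finitely supported with $\supp(\nu(\omega_s))\subset\supp(\mu)$, that the exchange between $\min$ over $\veta$ and the integral representation of CVaR is valid, and that the compactness of $\X$ and $\Y$ (or, failing that, the finite objective assumption) rules out pathological unbounded duals. Once these bookkeeping issues are settled, matching the resulting dual exactly to $(\LD)$ is mechanical and the rest of the argument is just an application of Theorem~\ref{thm:strong-duality}.
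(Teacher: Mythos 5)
Your argument is correct in outline, but it takes a genuinely different route from the paper's. The paper does \emph{not} first establish that $(\operatorname{LagrangianD})$ and $(\LD)$ have equal optimal values; instead, it proves sufficiency of \eqref{lag-cond1}--\eqref{lag-cond2} by the same weak-duality sandwich you use, and then handles necessity in two concrete steps: (a) for a primal optimum $(\vx^*,\vy^*)$ it takes the $(\LD)$-optimal $(\bar\vlambda,\bar\mu,\bar\nu,\bar\vpi)$ supplied by Theorem~\ref{thm:strong-duality}, reads \eqref{lag-cond1} off complementary slackness condition (i), and verifies $\cL(\vx,\vy,\bar\mu)\geq\cL(\vx^*,\vy^*,\bar\mu)$ by hand through the explicit inequality chain \eqref{fchain1}--\eqref{fchain5}, using the $\var$-based representation of CVaR, the dual feasibility conditions \eqref{dualfeascond1}--\eqref{dualfeascond4}, and conditions (ii)--(vii); (b) it then transfers the conclusion from this particular $\bar\mu$ to an arbitrary dual-optimal $\mu^*$ via a compactness/continuity argument. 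Your plan replaces step (a) with a parametric LP-duality computation: for fixed finitely supported $\mu$ you dualize the inner minimization (after substituting the LP representation of CVaR) and identify its dual with the $\mu$-slice of $(\LD)$, so that $\OBF_{P}=\OBF_{D}$ and attainment of the outer supremum follow wholesale from Theorem~\ref{thm:strong-duality}, and both directions of the characterization reduce to soft saddle-point arguments. What your approach buys is a cleaner logical structure that avoids the explicit inequality chain and the awkward two-stage transfer from $\bar\mu$ to $\mu^*$; what it costs is that the ``main obstacle'' you flag is real work --- the inner dualization essentially re-derives where $(\LD)$ comes from (which the paper takes as given, citing the analogue in \citealp{Noyan13}), and you must check feasibility and boundedness of the inner LP (guaranteed by the compactness assumption on $\X$ and $\Y$) and that \eqref{dualfeascond2} confines $\supp(\nu(\omega_s))$ to $\supp(\mu)$ so the measure-valued multipliers match the finitely many cut constraints. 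In substance the two proofs rest on the same LP-duality fact; the paper verifies it pointwise with complementary slackness, while you invoke it structurally.
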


\ignore{Simply modified the proof of Theorem 4.4 of \citet{Noyan16}}
\begin{proof}
Let us first show the following claim: \emph{If the equations
\eqref{lag-cond1}-\eqref{lag-cond2} hold for some primal feasible
$(\vx^*,\vy^*)$ and dual feasible  $\mu^*$, then these solutions are
simultaneously optimal with coinciding objective values.} According
to \eqref{lag-cond1} and \eqref{lag-cond2}, we have
\begin{align*}\OBF_P\leq \vf^\top\vx^*+\sum_{s \in S} p_s \vq^\top_s
\vy^*_s&= \cL(\vx^*,\vy^*,\mu^*)=\min\limits_{(\vx,\vy)\in
(\X,\Y)}\cL(\vx,\vy,\vmu^*)\\ &\leq
\max\limits_{\mu\in\M_+^F(\C)}\min\limits_{(\vx,\vy)\in
(\X,\Y)}\cL(\vx,\vy,\mu)=\OBF_D.\end{align*}On the other hand, the
weak duality implies that $\OBF_P \geq \OBF_D$, which proves the
claim.

\emph{We next prove the claim that if $(\vx^*,\vy^*)$ is primal
optimal and $\mu^*$ is dual optimal, then they satisfy the equations
\eqref{lag-cond1}-\eqref{lag-cond2}. To this end, we first show that
for any given primal optimal solution $(\vx^*,\vy^*)$ there exists a
corresponding dual feasible solution $\bar \mu$ such that
\eqref{lag-cond1} and \eqref{lag-cond2} are satisfied for the choice
$\mu^*=\bar \mu$.} Let us consider an optimal solution
$(\vx^*,\vy^*)$ of $(\LP)$. By Theorem \ref{thm:strong-duality}
there exists an optimal solution $(\bar
\vlambda,\bar\mu,\bar\nu,\bar\vpi)$ of $(\LD)$ with the same optimal
objective value. According to the complementary slackness condition
(i), the equality
$\cvar_\alpha(\vc^\top\hat{\vG}(\vx^*,\vy^*))=\cvar_\alpha(\vc^\top
\vZ)$ holds on the support of the measure $\bar\mu$, which implies
\eqref{lag-cond1}. \emph{To show that \eqref{lag-cond2} also holds,
we next prove that, substituting $\mu^*=\bar \mu$,
$\cL(\vx,\vy,\bar\mu)\geq \cL(\vx^*,\vy^*,\bar\mu)$ is valid for any
primal feasible solution $(\vx,\vy)$ (and equality holds for
$(\vx^*,\vy^*)$)}. For ease of exposition, let $U(\vx,\vy,\bar
\mu)=\int_\C \cvar_\alpha(\vc^\top \hat{\vG}(\vx,\vy))\,\bar\mu(\dd
\vc)$ and  $U(\vZ,\bar \mu)=\int_\C \cvar_\alpha(\vc^\top
\vZ))\,\bar\mu(\dd \vc)$. Then, using the definition of the
Lagrangian function \eqref{lagfunc} and the dual feasibility
conditions \eqref{dualfeascond3}-\eqref{dualfeascond4} we obtain the
following relations: \begin{align} &\cL(\vx,\vy,\bar\mu)+U(\vZ,\bar
\mu)=\vf^\top\vx+\sum_{s \in S} p_s
\vq^\top_s \vy_s +U(\vx,\vy,\bar \mu)\geq-\bar\vlambda^\top A \vx \notag + \E[\bar \vpi^\top T(\omega)\vx]\\
&-\E\left(\int_\C \vc^\top\bar{\vg}\vx\,\bar\nu(\dd\vc)\right)
+\sum_{s \in S} p_s\left(\bar
\vpi^\top_sW_s\vy_s-\int_\C \vc^\top\tilde{\vg}_s\vy_s[\bar\nu(\omega_s)](\dd\vc)\right)+U(\vx,\vy,\bar \mu)=-\bar\vlambda^\top A \vx\notag\\
&+\sum_{s \in S} p_s\bar\vpi^\top_s(T_s\vx+W_s\vy_s)
-\E\left(\int_\C \vc^\top\bar{\vg}\vx\,\bar\nu(\dd\vc)\right)
-\sum_{s \in S} p_s\int_\C
\vc^\top\tilde{\vg}_s\vy_s[\bar\nu(\omega_s)](\dd\vc)+U(\vx,\vy,\bar
\mu).\label{intercond1}
\end{align}
Now observe that the complementary slackness condition (iv) implies
$-\bar\vlambda^\top A \vx\geq -\bar\vlambda^\top \vb =
-\bar\vlambda^\top A\vx^*$, as $A\vx\leq\vb$ holds for all $\vx\in
X$, and the Lagrange multiplier $\bar \vlambda$ is nonnegative.
Similarly, as $T_s\vx+W_s\vy_s \ge \vh_s$ holds for all $\vy_s \in
\Y(\vx,\vxi(\omega_s)),~s\in S,$ and the Lagrange multiplier $\bar
\vpi_s$ is nonnegative for all $s\in S$, the complementary slackness
condition (v) implies that $\bar\vpi_s^\top(T_s\vx+W_s\vy_s)\geq
\bar\vpi_s^\top \vh_s=\bar\vpi^\top (T_s\vx^*+W_s\vy^*_s)$.
Therefore, the following relation is valid for any primal feasible
solution $(\vx,\vy)$:
\begin{equation}-\bar\vlambda^\top A \vx+\sum_{s \in S}
p_s\bar\vpi^\top_s(T_s\vx+W_s\vy_s)\geq -\bar\vlambda^\top A
\vx^*+\sum_{s \in S}
p_s\bar\vpi^\top_s(T_s\vx^*+W_s\vy^*_s).\label{intercond2}\end{equation}Observing
that $U(\vZ,\bar \mu)$ is independent of the decision vectors $\vx$
and $\vy$, by \eqref{intercond1} and \eqref{intercond2}, it is
sufficient to show that the inequality $-\E\left(\int_\C
\vc^\top\bar{\vg}\vx\,\bar\nu(\dd\vc)\right) -\sum_{s \in S}
p_s\int_\C
\vc^\top\tilde{\vg}_s\vy_s[\bar\nu(\omega_s)](\dd\vc)+U(\vx,\vy,\bar
\mu)\geq$ $-\E\left(\int_\C
\vc^\top\bar{\vg}\vx^*\,\bar\nu(\dd\vc)\right) -\sum_{s \in S}
p_s\int_\C
\vc^\top\tilde{\vg}_s\vy^*_s[\bar\nu(\omega_s)](\dd\vc)+U(\vx^*,\vy^*,\bar
\mu)$ is valid for any primal feasible solution $(\vx,\vy)$ (and
equality holds for $(\vx^*,\vy^*)$). Accordingly, we prove that the
following chain of inequalities holds for any $(\vx,\vy) \in
(\X,\Y)$, and that all inequalities hold with equality for
$(\vx,\vy)=(\vx^*,\vy^*)$.\small{\begin{align} &-\E\left(\int_\C
\vc^\top\bar{\vg}\vx\,\bar\nu(\dd\vc)\right) -\sum_{s \in S}
p_s\int_\C
\vc^\top\tilde{\vg}_s\vy_s[\bar\nu(\omega_s)](\dd\vc)+U(\vx,\vy,\bar
\mu) \label{fchain1}\\=& -\sum_{s \in S} p_s\int_\C
\vc^\top(\bar{\vg}_s\vx+\tilde{\vg}_s\vy_s)[\bar\nu(\omega_s)](\dd\vc)
\notag\\ +& \int\limits_{\C} \left(\var_\alpha(\vc^\top
\hat{\vG}(\vx,\vy))+\frac{1}{1-\alpha}\sum\limits_{s\in
S}p_s[\vc^\top\hat{\vg}_s(\vx,\vy_s)-\var_\alpha(\vc^\top
\hat{\vG}(\vx,\vy))]_+\right)\,\bar \mu(\dd(\vc))\label{fchain2}\\=&
-\sum_{s \in S} p_s\int_\C
\left(\vc^\top\hat{\vg}_s(\vx,\vy_s)-\var_\alpha(\vc^\top
\hat{\vG}(\vx,\vy))\right)[\bar\nu(\omega_s)](\dd\vc) \notag\\ +&
\frac{1}{1-\alpha}\sum\limits_{s\in S}p_s\int\limits_{\C}
[\vc^\top\hat{\vg}_s(\vx,\vy_s)-\var_\alpha(\vc^\top
\hat{\vG}(\vx,\vy))]_+\,\bar \mu(\dd(\vc))\label{fchain3}\\\geq &
\sum_{s \in S} p_s\int_\C
\left([\vc^\top\hat{\vg}_s(\vx,\vy_s)-\var_\alpha(\vc^\top
\hat{\vG}(\vx,\vy))]_+\right.\notag\\&\left.-(\vc^\top\hat{\vg}_s(\vx,\vy_s)-\var_\alpha(\vc^\top
\hat{\vG}(\vx,\vy)))\right)[\bar\nu(\omega_s)](\dd\vc)\label{fchain4}
\\&
\geq 0.\label{fchain5}
\end{align}}We note that an alternative definition
 of CVaR for a random variable $V$ is given by $\cvar_{\alpha}(V)=\var_\alpha(V) + \dfrac{1}{1-\alpha} \E([V-\var_\alpha(V)]_+) $, where $[z]_+ = \max(z, 0)$ \cite[]{Rockafellar00}.
Substituting this definition  into the third term and expanding the
expected value in the first term \eqref{fchain1} becomes
\eqref{fchain2}. Using the dual feasibility condition
\eqref{dualfeascond1} to replace $\bar \mu$ in the second term of
\eqref{fchain2} provides \eqref{fchain3}. Then, \eqref{fchain4}
follows from \eqref{dualfeascond2}; equality for
$(\vx,\vy)=(\vx^*,\vy^*)$ is ensured by the complementary slackness
condition (iii). Finally, \eqref{fchain5} is a consequence of the
trivial inequality $[a]_+\geq a$ and the non-negativity of the
random measure $\nu$. Equality is again ensured for
$(\vx,\vy)=(\vx^*,\vy^*)$, since the complementary slackness
condition (ii) implies that
$\vc^\top\hat{\vg}_s(\vx,\vy_s)-\var_{\alpha}(\vc^\top\hat{\vG}(\vx,\vy))\geq
0$ holds on the support of $\bar \nu(\omega_s)$ for all $s\in S$.

Finally, let us consider a primal optimal solution $(\vx^*,\vy^*)$
and a dual optimal solution $\mu^*$. We just showed above that there
exists some dual feasible $\bar \mu $ such that $U(\vx^*,\vy^*,\bar
\mu)=U(\vZ,\bar\mu)$ and $\cL(\vx^*,\vy^*,\bar
\mu)=\min_{(\vx,\vy)\in (\X,\Y)}\cL(\vx,\vy,\bar
\mu)=\vf^\top\vx^*+\sum_{s \in S} p_s \vq^\top_s \vy^*_s$. As the
set of feasible primal solutions is compact and the Lagrangian
function $\cL$ is continuous, there also exists some $(\hat\vx,\hat
\vy)$ such that $\cL(\hat\vx,\hat\vy,\mu^*)=\min_{(\vx,\vy)\in
(\X,\Y)}\cL(\vx,\vy,\mu^*)$. Then, by the optimality of $\mu^*$, we
have $\vf^\top\vx^*+\sum_{s \in S} p_s \vq^\top_s
\vy^*_s=\min_{(\vx,\vy)\in (\X,\Y)}\cL(\vx,\vy,\bar \mu)\leq
\min_{(\vx,\vy)\in (\X,\Y)}\cL(\vx,\vy,
\mu^*)=\cL(\hat\vx,\hat\vy,\mu^*)$, and consequently, the following
relation holds: \begin{align}&U(\vx^*,\vy^*,
\mu^*)-U(\vZ,\mu^*)=\cL(\vx^*,\vy^*,\mu^*) -\vf^\top\vx^*-\sum_{s
\in S} p_s \vq^\top_s \vy^*_s\notag\\&\geq
\cL(\vx^*,\vy^*,\mu^*)-\cL(\hat\vx,\hat\vy,\mu^*)=\cL(\vx^*,\vy^*,\mu^*)-\min_{(\vx,\vy)\in
(\X,\Y)}\cL(\vx,\vy,\mu^*)\geq0.\notag\end{align}On the other hand,
by the primal feasibility of $(\vx^*,\vy^*)$ and the non-negativity
of $\mu^*$, $U(\vx^*,\vy^*, \mu^*)-U(\vZ,\mu^*)\leq 0$ holds, which
immediately implies \eqref{lag-cond1} and \eqref{lag-cond2}.

\end{proof}

\bibliographystyle{spmpscinat}      
\bibliography{multivariate}

\begin{thebibliography}{36}
\providecommand{\natexlab}[1]{#1}
\providecommand{\url}[1]{#1}
\providecommand{\urlprefix}{URL }
\expandafter\ifx\csname urlstyle\endcsname\relax
  \providecommand{\doi}[1]{DOI~\discretionary{}{}{}#1}\else
  \providecommand{\doi}{DOI~\discretionary{}{}{}\begingroup
  \urlstyle{rm}\Url}\fi

\bibitem[{Ahmed(2006)}]{Ahmed06}
Ahmed, S.: Convexity and decomposition of mean-risk stochastic programs
  \textbf{106}(3), 433--446 (2006).
\newblock \doi{http://dx.doi.org/10.1007/s10107-005-0638-8}

\bibitem[{Artzner et~al.(1999)Artzner, Delbaen, Eber, and Heath}]{Artzner99}
Artzner, P., Delbaen, F., Eber, J., Heath, D.: Coherent measures of risk.
\newblock Mathematical Finance \textbf{9}(3), 203--228 (1999)

\bibitem[{Balcik and Beamon(2008)}]{Balcik08}
Balcik, B., Beamon, B.: Facility location in humanitarian relief.
\newblock International Journal of Logistics: Research and Applications
  \textbf{11}(2), 101--121 (2008)

\bibitem[{Bodur and Luedtke(2016)}]{BL16}
Bodur, M., Luedtke, J.R.: Mixed-integer rounding enhanced benders decomposition
  for multiclass service-system staffing and scheduling with arrival rate
  uncertainty.
\newblock Management Science, in press  (2016).
\newblock \doi{10.1287/mnsc.2016.2455}

\bibitem[{Bonnans and Shapiro(2000)}]{Bonnans00}
Bonnans, J.F., Shapiro, A.: Perturbation analysis of optimization problems.
\newblock Springer Series in Operations Research. Springer-Verlag, New York
  (2000)

\bibitem[{{\c{C}}elik et~al.(2012){\c{C}}elik, Ergun, Johnson, Keskinocak,
  Lorca, Pekg{\"u}n, and Swann}]{Celik2012}
{\c{C}}elik, M., Ergun, O., Johnson, B., Keskinocak, P., Lorca, A., Pekg{\"u}n,
  P., Swann, J.: Humanitarian logistics.
\newblock In: Mirchandani, P. (ed.) INFORMS Tutorials in Operations Research,
  pp. 18--49. INFORMS, Maryland (2012)

\bibitem[{Dentcheva and Martinez(2012)}]{Dentcheva2012}
Dentcheva, D., Martinez, G.: Two-stage stochastic optimization problems with
  stochastic ordering constraints on the recourse.
\newblock European Journal of Operational Research \textbf{219}(1), 1 -- 8
  (2012)

\bibitem[{Dentcheva and Ruszczy\'{n}ski(2003)}]{Dentcheva03}
Dentcheva, D., Ruszczy\'{n}ski, A.: Optimization with stochastic dominance
  constraints.
\newblock SIAM Journal on Optimization \textbf{14}(2), 548--566 (2003)

\bibitem[{Dentcheva and Ruszczy\'{n}ski(2009)}]{Dentcheva09}
Dentcheva, D., Ruszczy\'{n}ski, A.: Optimization with multivariate stochastic
  dominance constraints.
\newblock Mathematical Programming \textbf{117}(1), 111--127 (2009)

\bibitem[{Dentcheva and Wolfhagen(2015)}]{DW15}
Dentcheva, D., Wolfhagen, E.: Optimization with multivariate stochastic
  dominance constraints.
\newblock SIAM Journal on Optimization \textbf{25}(1), 564--588 (2015)

\bibitem[{Dentcheva and Wolfhagen(2016)}]{DW16}
Dentcheva, D., Wolfhagen, E.: Two-stage optimization problems with multivariate
  stochastic order constraints.
\newblock Mathematics of Operations Research \textbf{41}(1), 1--22 (2016)

\bibitem[{D\"{o}yen et~al.(2012)D\"{o}yen, Aras, and
  Barbaroso\u{g}lu}]{Doyen2011}
D\"{o}yen, A., Aras, N., Barbaroso\u{g}lu, G.: A two-echelon stochastic
  facility location model for humanitarian relief logistics.
\newblock Optimization Letters \textbf{6}(6), 1123--1145 (2012)

\bibitem[{Dup\v{a}cov\'{a}(2008)}]{Dupacova08}
Dup\v{a}cov\'{a}, J.: Risk objectives in two-stage stochastic programming
  models.
\newblock Kybernetika \textbf{44}(2), 227--242 (2008).
\newblock \url{http://dml.cz/dmlcz/135845}

\bibitem[{El\c{c}i(2016)}]{OzgunThesis16}
El\c{c}i, O.: Chance-constrained stochastic programming models for humanitarian
  relief network design.
\newblock MS Thesis, Sabanci University (2016)

\bibitem[{F\'{a}bi\'{a}n(2008)}]{Fabian08}
F\'{a}bi\'{a}n, C.I.: Handling {CVaR} objectives and constraints in two-stage
  stochastic models.
\newblock European Journal of Operational Research \textbf{191}(3), 888 -- 911
  (2008)

\bibitem[{Gollmer et~al.(2011)Gollmer, Gotzes, and Schultz}]{Gollmer11}
Gollmer, R., Gotzes, U., Schultz, R.: A note on second-order stochastic
  dominance constraints induced by mixed-integer linear recourse.
\newblock Mathematical Programming \textbf{216}(1), 179--190 (2011)

\bibitem[{Gutjahr and Nolz(2016)}]{Gutjahr16}
Gutjahr, W.J., Nolz, P.C.: Multicriteria optimization in humanitarian aid.
\newblock European Journal of Operational Research \textbf{252}(2), 351 -- 366
  (2016)

\bibitem[{{Homem-de-Mello} and Mehrotra(2009)}]{Mello09}
{Homem-de-Mello}, T., Mehrotra, S.: A cutting surface method for uncertain
  linear programs with linear stochastic dominance constraints.
\newblock SIAM Journal on Optimization \textbf{20}(3), 1250--1273 (2009)

\bibitem[{Hong et~al.(2015)Hong, Lejeune, and Noyan}]{Hong2015}
Hong, X., Lejeune, M.A., Noyan, N.: Stochastic network design for disaster
  preparedness.
\newblock IIE Transactions \textbf{47}(4), 329--357 (2015)

\bibitem[{Hu and Mehrotra(2012)}]{Mehrotra12}
Hu, J., Mehrotra, S.: Robust and stochastically weighted multiobjective
  optimization models and reformulations.
\newblock Operations Research \textbf{60}(4), 936--953 (2012)

\bibitem[{Hu et~al.(2012)Hu, Homem-de Mello, and Mehrotra}]{Hu12}
Hu, J., Homem-de Mello, T., Mehrotra, S.: Sample average approximation of
  stochastic dominance constrained programs.
\newblock Mathematical Programming \textbf{133}(1-2), 171--201 (2012)

\bibitem[{Huang et~al.(2012)Huang, Smilowitz, and Balcik}]{Huang2012}
Huang, M., Smilowitz, K., Balcik, B.: Models for relief routing: Equity,
  efficiency and efficacy.
\newblock Transportation Research Part E \textbf{48}(1), 2--18 (2012)

\bibitem[{K\"{u}\c{c}\"{u}kyavuz and Noyan(2016)}]{Kucukyavuz14}
K\"{u}\c{c}\"{u}kyavuz, S., Noyan, N.: Cut generation for optimization problems
  with multivariate risk constraints.
\newblock Mathematical Programming \textbf{159}(1), 165--199 (2016)

\bibitem[{Liberatore et~al.(2013)Liberatore, Pizarro, Blas, Ortu{\~{n}}o, and
  Vitoriano}]{Liberatore2013}
Liberatore, F., Pizarro, C., Blas, C.S., Ortu{\~{n}}o, M.T., Vitoriano, B.:
  Decision Aid Models for Disaster Management and Emergencies, chap.
  Uncertainty in Humanitarian Logistics for Disaster Management. A Review, pp.
  45--74.
\newblock Atlantis Press, Paris (2013)

\bibitem[{Liu et~al.(2015)Liu, K\"{u}\c{c}\"{u}kyavuz, and Noyan}]{Liu15}
Liu, X., K\"{u}\c{c}\"{u}kyavuz, S., Noyan, N.: Robust multicriteria
  risk-averse stochastic programming models  (2015).
\newblock \url{http://www.optimization-online.org/DB_FILE/2015/10/5157.pdf}

\bibitem[{Noyan(2012)}]{Noyan12}
Noyan, N.: Risk-averse two-stage stochastic programming with an application to
  disaster management.
\newblock Computers and Operations Research \textbf{39}(3), 541--559 (2012)

\bibitem[{Noyan and Rudolf(2013)}]{Noyan13}
Noyan, N., Rudolf, G.: Optimization with multivariate conditional
  value-at-risk-constraints.
\newblock Operations Research \textbf{61}(4), 990--1013 (2013)

\bibitem[{Noyan and Rudolf(2016)}]{Noyan16}
Noyan, N., Rudolf, G.: Optimization with stochastic preferences based on a
  general class of scalarization functions  (2016).
\newblock \url{http://www.optimization-online.org/DB_HTML/2016/09/5636.html}

\bibitem[{Rawls and Turnquist(2010)}]{Rawls2010}
Rawls, C.G., Turnquist, M.A.: Pre-positioning of emergency supplies for
  disaster response.
\newblock Transportation Research Part B: Methodological \textbf{44}(4),
  521--534 (2010)

\bibitem[{Rawls and Turnquist(2011)}]{Rawls2011}
Rawls, C.G., Turnquist, M.A.: Pre-positioning planning for emergency response
  with service quality constraints.
\newblock OR Spectrum \textbf{33}(3), 481--198 (2011)

\bibitem[{Rockafellar and Uryasev(2000)}]{Rockafellar00}
Rockafellar, R., Uryasev, S.: Optimization of conditional value-at-risk.
\newblock The Journal of Risk \textbf{2}(3), 21--41 (2000)

\bibitem[{Ruszczy\'{n}ski and Shapiro(2003)}]{Ruszczynski03}
Ruszczy\'{n}ski, A., Shapiro, A. (eds.): Stochastic Programming, Handbooks in
  Operations Research and Management Science 10.
\newblock Elsevier, Amsterdam (2003)

\bibitem[{Salmer\'{o}n and Apte(2010)}]{Apte2010}
Salmer\'{o}n, J., Apte, A.: Stochastic optimization for natural disaster asset
  prepositioning.
\newblock Production and Operations Management \textbf{19}(5), 561--574 (2010)

\bibitem[{Takriti and Ahmed(2004)}]{Takriti04}
Takriti, S., Ahmed, S.: On robust optimization of two-stage systems.
\newblock Mathematical Programming \textbf{99}(1), 109--126 (2004)

\bibitem[{Van~Slyke and Wets(1969)}]{VW69}
Van~Slyke, R., Wets, R.: L-shaped linear programs with applications to optimal
  control and stochastic programming.
\newblock SIAM Journal of Applied Mathematics \textbf{17}(4), 638--663 (1969)

\bibitem[{Vitoriano et~al.(2011)Vitoriano, Ortu{\~{n}}o, Tirado, and
  Montero}]{Vitoriano11}
Vitoriano, B., Ortu{\~{n}}o, M.T., Tirado, G., Montero, J.: A multi-criteria
  optimization model for humanitarian aid distribution.
\newblock Journal of Global Optimization \textbf{51}(2), 189--208 (2011)

\end{thebibliography}
\end{document}